\theoremstyle{plain}
\newtheorem{theorem}{Theorem}[section]
\newtheorem{proposition}[theorem]{Proposition}
\newtheorem{lemma}[theorem]{Lemma}
\numberwithin{theorem}{section}
\numberwithin{equation}{section}
\newcommand{\average}{{\mathchoice {\kern1ex\vcenter{\hrule height.4pt
width 6pt depth0pt} \kern-9.7pt} {\kern1ex\vcenter{\hrule
height.4pt width 4.3pt depth0pt} \kern-7pt} {} {} }}
\def\R{\mathbb{R}}
\def\div{\text{div}}
\renewcommand{\b }{\beta }
\renewcommand{\d}{\delta }
\newcommand{\D }{\Delta }
\newcommand{\e }{\varepsilon }
\newcommand{\g }{\gamma}
\newcommand{\G }{\Gamma}
\newcommand{\n }{\nabla }
\renewcommand{\phi}{\varphi}
\newcommand{\s }{\sigma }
\renewcommand{\t }{\tau }
\renewcommand{\o }{\omega }
\renewcommand{\O }{\Omega }
\newcommand{\ov}{\overline}
\newcommand{\be}{\begin{equation}}
\newcommand{\ee}{\end{equation}}
\newcommand{\de}{\partial}
\newcommand{\ti}{\widetilde}
\newcommand{\K }{\mathcal{K}}
\newcommand{\calC }{\mathcal{C}}
\newcommand{\cK}{{\mathcal K}}
\newcommand{\B}{{Q}}
\newcommand{\1}{\mathbbm{1}}
\renewcommand{\epsilon}{\varepsilon}
\begin{document}
\title[Nonlocal diffusion of smooth sets]{Nonlocal diffusion of smooth sets}

\author{Anoumou Attiogbe}
\address{A. A. : African Institute for Mathematical Sciences in Senegal, KM 2, Route de
Joal, B.P. 14 18. Mbour, Senegal.}
\email{anoumou.attiogbe@aims-senegal.org}

\author{Mouhamed Moustapha Fall}
\address{M. M. F.: African Institute for Mathematical Sciences in Senegal, KM 2, Route de
Joal, B.P. 14 18. Mbour, Senegal.}
\email{mouhamed.m.fall@aims-senegal.org}
\author{El Hadji Abdoulaye THIAM}
\address{H. E. A. T. : Université de Thies, UFR des Sciences et Techniques, département de mathématiques, Thies.}
\email{elhadjiabdoulaye.thiam@univ-thies.sn}

\thanks{Keywords: Motion by fractional mean curvature flow;  Fractional heat equation;  Fractional mean curvature; Harmonic extension.}

\begin{abstract}
We consider  normal velocity of smooth sets evolving by the $s-$fractional diffusion. We prove that for small time, the normal velocity of  such sets is nearly proportional to the   mean curvature of the boundary of the initial set for $s\in [\frac{1}{2}, 1)$ while,   for $s\in (0, \frac{1}{2})$,  it is nearly proportional to the fractional mean curvature of the initial set. Our results shows that  the motion by (fractional) mean curvature flow can be approximated by   fractional heat diffusion and  by a  diffusion by means of harmonic extension  of smooth sets.
%
%
%
\end{abstract}
\maketitle
\date{today}
%
%
%
\section{Introduction}\label{s:Intr}
For $N \geq 2$, we let $\O_0$ be a bounded open set of $\R^N$ with boundary $\Gamma_0$. Consider the heat equation with initial data the indicator function of the set $\O_0$:
\begin{equation}\label{HeatEquation}
\begin{cases}
\displaystyle \frac{\de u}{\de t}-\D u=0 \qquad &\textrm{in } \R^N \times (0, t_1]\\\\
u(x, 0)= \1_{\O_0}(x) \qquad &\textrm{ on } \R^N.
\end{cases}
\end{equation}
for some time $t_1>0$. 
In 1992, Bence-Merriman-Osher \cite{BMO} provided a computational algorithm for tracking the evolution in time of the set $\O_0$ whose boundary $\Gamma_0$ moves with normal velocity proportional to its classical mean curvature. At time $t_1>0$, they considered 
$$
\O_1= \lbrace x\in \R^N: \quad u(x, t_1) \geq {1}/{2} \rbrace.
$$
Bence-Merriman-Osher \cite{BMO} applied iteratively this procedure to generate a sequence of sets $\left(\O_j\right)_{j\geq 0}$ and conjectured in \cite{BMO} that their boundaries $\Gamma_j$ evolved by mean curvature flow. 
Later Evans \cite{Evans} provided a rigorous proof for the Bence-Merriman-Osher algorithm by means of  the level-set approach to mean curvature flow developed by Osher-Sethian \cite{OS}, Evans-Spruck \cite{ES1, ES2, ES3, ES4} and Chen-Giga-Goto \cite{CGG}. For related works in this direction, we refer the reader to   \cite{BG,Ishii,IPS,Vivier,Leoni,GIO,swartz} and references therein.\\

Recently Caffarelli and Souganadis considered in \cite{CPS}  nonlocal diffusion of open sets $E\subset \R^N$ given by 
%
%
\begin{align}\label{FractHeatEq}
\begin{cases}
\displaystyle \frac{\de u}{\de t}+ \left(-\D\right)^s u=0 \qquad &\textrm{in } \R^N \times \left(0, \infty\right)\\
u(x, 0)= \tau_E(x) &\textrm{in } \R^N \times \lbrace t=0 \rbrace,
\end{cases}
\end{align}
where
$$
\tau_E(x)= \1_E(x)-\1_{\R^N\setminus \ov E}(x).
$$
We consider the fractional  heat kernel $K_s$  with Fourier transform given by $\widehat K(\xi, t)=e^{-t|\xi|^{2s}}$. It satisfies
$$
\begin{cases}
\displaystyle \frac{\de K_s}{\de t}+ \left(-\D\right)^s K_s=0 \qquad &\textrm{ in } \R^N \times \left(0, \infty\right)\\
K_s= \d_0 &\textrm{ on } \R^N \times \lbrace t=0 \rbrace.
\end{cases}
$$
 It follows that the unique bounded solution to     \eqref{FractHeatEq} is given by  
\be \label{eq:uxt}
u(x,t)=K_s(\cdot, t)\star \t_E (x)=\int_{\R^N} K_s(x-y, t) \t_E(y) dy.
\ee
By solving a finite number of times \eqref{FractHeatEq} for a small fixed time step $\s_s(h)$,  the authors in  \cite{CPS}    find a discrete  family of sets 
 $$
E^h_0=E, \qquad E^h_{nh}= \lbrace x\in \R^N: \quad K_s (\cdot,\s_s(h))\star\t_{E^h_{(n-1)h}}(x)>0 \rbrace ,
$$
for a suitable scaling function $\s_s$ to be defined below. It is proved in \cite{CPS} that  as  $nh\to t$,   $  \de E^h_{nh}$ converges, in a suitable sense, to $\G_t$.  Here,   the family of hypersurface $\{\G_t\}_{t>0}$, with $\G_0=\de E$,  evolves under generalized   mean curvature flow for $s\in [\frac{1}{2}, 1)$ and under generalized fractional mean curvature flow   for    $s \in (0,\frac{1}{2})$. We refer the reader to  \cite{CPS,Imbert,Evans} for the notion generalized  (nonlocal) mean curvature flow which considers the level sets of viscosity solutions to  quasilinear  parabolic integro-differential equations.\\

In the present paper,  we are interested in the normal velocity of the sets 
\be \label{eq:scaled-sets}
E_t:=\left\{x\in \R^N\,:\,   \K_s (\cdot,\s_s(t))\star\t_{E}(x)>0  \right\}
\ee
as they depart from a sufficiently smooth   initial set $E_0:=E$.  We consider  here and in the following 
\be \label{KernelFormula}
\cK_s(x,t)=t^{-\frac{N}{2s}}P_s(t^{-\frac{1}{2s}}x), \qquad \textrm{ for some radially symmetric function   $P_s\in C^1(\R^N)$.}
\ee
We make the following assumptions: 
 \begin{equation}\label{KernelEstimate}
\frac{\calC^{-1}_{N,s}}{1+|y|^{N+2s}} \leq P_s(y) \leq \frac{\calC_{N,s}}{1+|y|^{N+2s}},   \qquad |\nabla P_s(y)| \leq \frac{\calC_{N,s}}{1+|y|^{N+2s+1}}.
\end{equation}
and
\begin{equation}\label{ConvergenceUnifandLoc}
\lim_{t\to 0} t^{-1} \K_s(y, t)=\frac{C_{N,s}}{|y|^{N+2s}} \qquad\textrm{ locally uniformly in $\R^N\setminus \{0\}$,}
\end{equation}
for some constants $C_{N,s}, \calC_{N,s}>0 $. In the Section \ref{eq:sss-Appli} below, we provide examples of valuable kernels $\cK_s$ satisfying the above properties.\\
Now, as we shall see below (Lemma \ref{Lemma21}), for  $t>0$ small, $ \nabla_xu(x,\s_s(t)) \not=0$ for all $x\in B(y, \s_s(t)^{\frac{1}{2s}})$ and $y\in \de E$.  Hence $\partial E_t$ is a 
$C^1$ hypersurface, for small $t>0$.   For  $t>0$ and $y\in \de E$, we let $v=v(t,y)$ be such that 
\be \label{eq:def-vtnu}
y+vt\nu(y) \in \de E_t\cap B(y, \s_s(t)^{\frac{1}{2s}})  ,
\ee
where $\nu(y) $ is the unit exterior normal of $E$ at $y$.  In the spirit of  the work of Evans \cite{Evans} on  diffusion of smooth sets, we provide in this paper an expansion of $v(t,y)$ as $t\to 0$. It turns out that $v(0,y)$ is proportional to the fractional mean curvature of $\de E$ at $y$ for $s\in (0,1/2)$ and $v(0,y)$ is proportional to the classical mean curvature of $\de E$ at $y$ for $s\in [1/2,1)$. \\
 We notice that it is not a priori clear from \eqref{eq:def-vtnu}, that  $v$ remains finite as $t\to0$. This is where the (unique)  appropriate choice of  $\s_s(t)$ enters during our estimates.  Here and in the following, we define 
\begin{equation}\label{scaling}
\sigma_s(t)=
\begin{cases}
t^{\frac{2s}{1+2s}} &\qquad \textrm{for } s\in(0, 1/2),\\
t^{s} &\qquad \textrm{for } s\in(1/2, 1)
\end{cases}
\end{equation}
and for $s=1/2$, $\s_s(t)$ is the unique positive solution to 
\begin{equation}\label{scaling1}
t=\s_{1/2}^2(t)|\log(\s_{1/2}(t))|.
\end{equation} 
Before stating our main result, we recall that for $s \in (0, \frac{1}{2})$ and $\de E$ is of class $\calC^{1, \b}$ for some $\b >2s$,   the fractional mean curvature of $\de E$ is   defined for    $x\in \de E$ as
$$
H_s(x):=P. V. \int_{\R^N} \frac{\t_E(y)}{|x-y|^{N+2s}} dy= \lim_{\e \to 0} \int_{\R^N \setminus B_\e(x)} \frac{\t_E(y)}{|x-y|^{N+2s}} dy.
$$
On the other hand, if $\de E$ is of class $\calC^{2}$ then the normalized mean curvature of $\de E$ is given, for    $x\in \de E$,  by 
$$
H(x): =   \frac{N-1}{N+1}  \lim_{\e \to 0}\frac{1}{\e |B_\e(x)|} \int_{   B_\e(x)}\! \t_E(y) \,dy,
$$
see also \eqref{eq:def-mc} and \cite{Fall23}.
Having fixed the above definitions, we now state our main result.
\begin{theorem}\label{MainResult}
We let $s\in (0,1)$ and $E \subset\R^N$, $N\geq 2$. We assume, for $s\in (0,1/2)$, that  $\de E$ is of class $\calC^{1,\b}$ for some $\b>2s$   and that  $\de E$ is of class $\calC^{3}$, for $s \in[{1}/{2}, 1)$.
Then, as $t\to 0$,  the expansion of $v(t,y)$, defined in \eqref{eq:def-vtnu},  is given, locally uniformly in $y\in \de E$,  by 
\begin{equation*}
v(t,y)=\begin{cases}
a_{N,s} H_s(y)+o_t(1)&\qquad \textrm{for } s \in(0,1/2)\\\\
 b_{N}(t)H(y)+ O\left(\frac{1}{\log(\s_{1/2}(t))}\right)&\qquad \textrm{for } s=1/2\\\\
 c_{N,s}H(y)+O\left(t^{\frac{2s-1}{2}}\right)&\qquad \textrm{for } s \in(1/2, 1),
\end{cases}
\end{equation*}
where $H_s$ and  $H$ are respectively the fractional and the classical mean curvatures of  $\de E$  and the positive constants $a_{N,s}$, $b_{N,1/2}$ and $c_{N,s}$ are given by
$$
a_{N,s}=\frac{C_{N,s} }{\displaystyle2 \int_{\R^{N-1}} P_s\left(y^\prime,0\right) dy^\prime},\quad 
b_{N}(t)= \frac{\displaystyle\int_{B^{N-1}_{\s_{\frac{1}{2}}(t)^{-1}}}|y^\prime|^2  P_{1/2}(y^\prime,0) dy^\prime}{2|\log(\s_{\frac{1}{2}}(t))|\displaystyle\int_{\R^{N-1}}  P_{1/2}(y^\prime,0) dy'} ,
%
%
 \quad   c_{N,s}=\frac{\displaystyle\int_{\R^{N-1}}|y^\prime|^2  P_s(y^\prime,0) dy^\prime}{\displaystyle 2\int_{\R^{N-1}}  P_s(y^\prime,0) dy^\prime}
$$
and $P_s(y):=\K_s(y,1)$.
\end{theorem}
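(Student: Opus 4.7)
The plan is to read off $v(t,y)$ directly from the defining equation and then asymptotically expand the two resulting factors separately. Since $E_t=\{x:u(x,\sigma_s(t))>0\}$ with $u$ as in \eqref{eq:uxt}, and by Lemma \ref{Lemma21} the gradient $\nabla_x u(\cdot,\sigma_s(t))$ is nonzero on a neighbourhood of $\partial E$ for small $t$, the condition \eqref{eq:def-vtnu} amounts to $u(y+vt\nu(y),\sigma_s(t))=0$. The fundamental theorem of calculus then gives
$$
v(t,y)\;=\;-\,\frac{u(y,\sigma_s(t))}{t\,G(t,y)},\qquad G(t,y):=\int_0^1\nu(y)\cdot\nabla_x u\bigl(y+\theta v t\nu(y),\sigma_s(t)\bigr)\,d\theta,
$$
so the theorem reduces to matching asymptotics for the numerator $u(y,\sigma_s(t))$ and the denominator $G(t,y)$ as $t\to 0$.

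For $G(t,y)$, I would rotate so that $\nu(y)=e_N$, set $\lambda:=\sigma_s(t)^{1/(2s)}$, and rescale $z=y+\lambda w$ using \eqref{KernelFormula}. This gives
$$
\partial_{x_N}u(y,\sigma_s(t))\;=\;\frac{1}{\lambda}\int_{\R^N}(\partial_N P_s)(-w)\,\tau_{(E-y)/\lambda}(w)\,dw.
$$
As $\lambda\to 0$ the rescaled set $(E-y)/\lambda$ converges in $L^1_{\loc}$ to the half-space $\{w_N<0\}$; combining this with the decay \eqref{KernelEstimate} to control the tail, and with the fact that $\partial_N P_s$ is odd in $w_N$ by radial symmetry of $P_s$, one obtains that the inner integral tends to $-2\int_{\R^{N-1}}P_s(w',0)\,dw'$. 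Hence $G(t,y)=-2\lambda^{-1}\int_{\R^{N-1}}P_s(w',0)\,dw'\,(1+o(1))$, provided the a posteriori bound $|v|t=o(\lambda)$ holds (which will be verified from the final formula).

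The core of the argument is the expansion of $u(y,\sigma_s(t))$, split by regime. For $s\in(0,1/2)$, the $C^{1,\beta}$ hypothesis with $\beta>2s$ makes the principal-value integral defining $H_s(y)$ absolutely convergent; splitting into near- and far-field and using the pointwise limit \eqref{ConvergenceUnifandLoc} together with \eqref{KernelEstimate} (dominated convergence on the far field, a Lipschitz-type cancellation on the near field) yields $u(y,\sigma_s(t))=\sigma_s(t)\,C_{N,s}\,H_s(y)+o(\sigma_s(t))$. Combined with the previous step and the algebraic identity $\sigma_s(t)\lambda/t\equiv 1$ for $s<1/2$, this delivers $v(t,y)=a_{N,s}H_s(y)+o(1)$. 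For $s\in(1/2,1)$, I would instead write $\partial E$ locally as a $C^3$ graph $z_N=\phi(z'-y')$ with $\phi(0)=0$, $\nabla\phi(0)=0$, recast
$$
u(y,\sigma_s(t))=2\int_{\R^{N-1}}\int_0^{\phi(\lambda w')/\lambda}P_s(-w',-w_N)\,dw_N\,dw',
$$
Taylor expand $\phi(\lambda w')=\tfrac{\lambda^2}{2}w'^T D^2\phi(0)w'+O(\lambda^3|w'|^3)$, and apply the rotational identity $\int w'_iw'_jP_s(w',0)\,dw'=\tfrac{\delta_{ij}}{N-1}\int|w'|^2P_s(w',0)\,dw'$. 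The leading term is $\tfrac{\lambda\,\operatorname{tr}(D^2\phi(0))}{2(N-1)}\int|w'|^2P_s(w',0)\,dw'$, and using $\lambda^2=t$ together with the identification $\operatorname{tr}(D^2\phi(0))/(N-1)=H(y)$ from \cite{Fall23} produces the claimed $c_{N,s}H(y)$; the $O(t^{(2s-1)/2})$ remainder comes from integrating the cubic Taylor error against $P_s(w',0)$ out to scale $|w'|\sim\lambda^{-1}$ and using the decay \eqref{KernelEstimate}. The critical case $s=1/2$ proceeds via the same Taylor expansion, but since $\int|w'|^2P_{1/2}(w',0)\,dw'$ diverges logarithmically, the integration must be truncated at $|w'|\le\lambda^{-1}$, and the implicit relation \eqref{scaling1} converts the resulting $|\log\lambda|$ factor into the normalisation $b_N(t)$ with remainder $O(|\log\sigma_{1/2}(t)|^{-1})$.

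The main obstacle is the locally uniform passage to the limit inside these integrals. For $s<1/2$ this requires controlling the principal-value singularity at $z=y$ uniformly in $y$, using the $C^{1,\beta}$ modulus of $\phi$ with $\beta>2s$ to bound the near-field difference between $\tau_E$ and its odd reflection around $y$. In the borderline case $s=1/2$ the delicate log bookkeeping—matching the radial cutoff at scale $\lambda^{-1}$ against the implicit definition \eqref{scaling1}—is precisely what forces the particular choice of $\sigma_{1/2}$ and is the most subtle piece of the analysis. Throughout, the $C^{1,\beta}$ or $C^3$ bounds on the local graph $\phi$ must be made uniform over compact subsets of $\partial E$, which supplies the \emph{locally uniformly in $y$} conclusion.
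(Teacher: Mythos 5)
Your overall reduction---rewrite $\partial E_t\ni y+vt\nu(y)$ as $u(y+vt\nu(y),\sigma_s(t))=0$, apply the fundamental theorem of calculus to isolate $v=-u(y,\sigma_s(t))/\bigl(t\,G(t,y)\bigr)$, and then rescale by $\lambda=\sigma_s(t)^{1/(2s)}$ to pass to a half-space limit in $G$ and expand $u(y,\sigma_s(t))$ separately---is precisely the mechanism the paper uses for $s\in(0,1/2)$ (Lemma \ref{Lemma2} supplies the asymptotics of the $G$-type quantity, and Lemma \ref{L1-Convergence} supplies the $u(y,\sigma_s(t))=\sigma_s(t)C_{N,s}H_s(y)+o(\sigma_s(t))$ estimate by sending $t\to0$ and then $r\to0$). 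For $s\in[1/2,1)$ you keep the same $v=-u/(tG)$ split, whereas the paper avoids a separate $G$-estimate by expanding $u(vte_N,\sigma_s(t))$ in one stroke: the change of variables $y\mapsto y+vte_N$ moves the shift into the graph limits $\int_0^{\gamma(y')-vt}$, and after Taylor expanding $\K_s(y',\cdot)$ around $y_N=0$ the $-vt\int\K_s(y',0)\,dy'$ term and the $\tfrac12H(0)\int|y'|^2\K_s(y',0)\,dy'$ term emerge together, so the denominator coefficient never has to be estimated at the rate $O(t^{(2s-1)/2})$ (resp.\ $O(1/|\log\sigma_{1/2}(t)|)$). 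Your variant works too, but then you do owe the reader the matching remainder estimate on $G$ at those rates, which your write-up does not pin down.

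The one genuine gap is the a priori control on $vt$. You write that the bound $|v|t=o(\lambda)$ ``will be verified from the final formula,'' but that is circular: the half-space limit defining $G$ already uses $|v|t=o(\lambda)$ to discard the shift $\theta vt\nu(y)$ in the argument of $\nabla u$ (and likewise to discard the shift in the Taylor-expanded graph integrals in the $s\geq1/2$ case), so you cannot derive this bound from the very expansion it is needed to justify. The paper resolves this independently in Lemma~\ref{DistanceEstimate}: combining the uniform lower bound $\partial_{x_N}u(\cdot,\sigma_s(t))\geq C\sigma_s(t)^{-1/(2s)}$ of Lemma~\ref{Lemma21} (which requires only the $C^{1,\beta}$ regularity, no knowledge of $v$) with the elementary size estimate $|u(0,\sigma_s(t))|=O(\sigma_s(t))$ from Lemma~\ref{L1-Convergence} and \eqref{KernelFormula-1}, one gets $|v|t=O(\sigma_s(t)^{(1+2s)/(2s)})$ directly from the FTC identity. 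You should insert this step before taking any limits. A smaller issue is the sign in your claim $G=-2\lambda^{-1}\int P_s(w',0)\,dw'\,(1+o(1))$: with the outer-normal convention \eqref{nu} and $E$ locally the epigraph \eqref{eq:deE-graph}, the directional derivative $\nu\cdot\nabla_x u$ is strictly positive near $\partial E$ by Lemma~\ref{Lemma21}, so the sign should be checked against your choice of orientation; it does not affect the argument but will propagate into the final formula for $v$ if left inconsistent.
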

Some remarks are in order.
The assumption of $E$ being of class $\calC^3$ in Theorem  \ref{MainResult} is motivated by the result of Evans in \cite{Evans}, where in the case $s=1$ and $\cK_1$, the heat kernel, he obtained  $v=(N-1)H(0)+O(t^{\frac{1}{2}})$. We  notice that from our argument below, we cannot improve  the error term $o_t(1)$ in the case $s\in (0,1/2)$ even if $E$ is of class $\calC^\infty$. This is due to the definition of the fractional mean curvature $H_s$  as a principal value integral.   We finally remark,  in the particular case, that  $ 
\K_{1/2}(y, t)=C_{N}\frac{t}{(t^2+|y|^2)^{\frac{N+1}{2}}},$  we have that
$$
b_N(t)=\frac{|S^{N-2}| C_{N}}{2 \displaystyle\int_{\R^{N-1}}  P_{1/2}(y^\prime,0) dy^\prime}+ O\left(\frac{1}{\log(\s_{1/2}(t))}\right).
$$
\subsection{Some applications of Theorem \ref{MainResult}}\label{eq:sss-Appli}
We next put emphasis on two valuable examples where Theorem \ref{MainResult} applies. 
\begin{itemize}
\item[1)] \textit{Fractional heat  diffusion of smooth sets.}
We recall, see e.g. \cite{BGet, Juan},  that the fractional heat kernel  $K_s$ satisfies \eqref{KernelFormula}, \eqref{KernelEstimate} and \eqref{ConvergenceUnifandLoc} with \begin{equation}\label{ConvergenceUnifandLoc1}
C_{N,s}=s 2^{2s}\frac{sin(s\pi) \G\left(\frac{N}{2}+s\right) \G(s)}{\pi^{1+\frac{N}{2}}}.
\end{equation}
We recall that $K_s$ is known explicitly only in the case $s=1/2$, where  $ 
K_{1/2}(y, t)=C_{N,1/2}\frac{t}{(t^2+|y|^2)^{\frac{N+1}{2}}}.$
In this case Theorem \ref{MainResult} provides an approximation of the (fractional) mean curvature motion by fractional heat diffusion of smooths sets, thereby  extending, in the fractional setting,  Evan's result in \cite{Evans} on heat diffusion of smooth sets.
\item[2)] \textit{Diffusion of smooth sets by Harmonic extension.}
We consider the Poisson kernel on the half space $\R^{N+1}_+: =\R^N\times (0, \infty)$, given by 
\be
 \ov K_s(x,t):=t^{-N}P_s(x/t), \qquad      P_s(x)=\frac{p_{N,s}}{(1+|x|^2)^{\frac{N+2s}{2}}},
\ee
where $p_{N,s}:=\frac{1}{\int_{\R^N} (1+|y|^2)^{-\frac{N+2s}{2}}\,dy}$.
Thanks to the result of Caffarelli and Silvestre in   \cite{CSilv}, the function 
$$
w(x, t)= \ov K_s(\cdot,t)\star\t_E(x)=p_{N,s}t^{2s} \int_{\R^N} \frac{\t_E(y)}{\left(t^2+|y-x|^2\right)^{\frac{N+2s}{2}}} dy
$$
solves 
\begin{align*}
\begin{cases}
\div(t^{1-2s} \n w)=0& \qquad \textrm{in }\R^N\times (0, \infty)\\
w=\t_E& \qquad \textrm{on }\R^N\times \lbrace t=0 \rbrace.
\end{cases}
\end{align*}
It is clear that  $\cK_s(x,t):=\ov K_s(x,t^{\frac{1}{2s}})$  satisfies  \eqref{KernelFormula}, \eqref{KernelEstimate} and \eqref{ConvergenceUnifandLoc} with $C_{N,s}=p_{N,s}$. Hence, Theorem \ref{MainResult} provides  an expansion of the normal velocities  of the boundary of the  sets
$$
 E_t:=\left\{ x\in \R^N\,:\,  \ov K_s(\cdot, \s_s(t)^{\frac{1}{2s}})\star \t_E(x)>0   \right\},
$$
where $\s_s(t)$ is given by \eqref{scaling} and   \eqref{scaling1}. Therefore this Harmonic extension yields an approximation of (fractional) mean curvature motion of smooth sets.
\end{itemize}
%

%
%
%
%
%
%
%
%
%
%
%
We  conclude Section \ref{s:Intr} by noting that the  notion of nonlocal curvature appeared for the first time in   \cite{CPS}.  Later on, the  study of  geometric problems involving fractional mean curvature has attracted a lot of interest, see \cite{CRS,  CRS, AVal}, the survey paper \cite{Fall23} and the references therein.   While the   mean curvature flow  is well studied, see e.g. \cite{AG,EH,GH,Huisken, Giga,GA}, its  fractional counterpart    appeared only recently in the literature, see e.g. \cite{CSV,SV,CMP1,CMP2,CMP3,Imbert,LD}.\\

We finally remark that the changes  of normal velocity of the nonlocal diffused   sets as $s$ varies in (0,1/2) and [1/2,1), appeared analogously   in phases  transition problems, see e.g. \cite{SV,Garroni1,Garroni2}.\\

\bigskip

\noindent
   \textbf{ Acknowledgments} \\
 This work is supported by the Alexander von Humboldt foundation and the    German Academic
Exchange Service (DAAD). Part of this work was done while the authors were visiting the International Center for Theoretical Physics (ICTP) in December 2019 within the  Simons associateship program.
%
%
%
%
%
%
%
%
%
%
%
\section{Preliminary results and notations}\label{section4}
%
%
Unless otherwise stated,  we assume for the following that $E$ is an open set of class $\calC^{1, \b}$, with  $0\in \de E$ and the unit normal of $\de E$ at 0 coincides with  $e_N$.
We denote by $Q_r=B^{N-1}_r \times (-r, r)$ the cylinder of $\R^N$ centred at the origin with $B^{N-1}_r$ the ball of $\R^{N-1}$ centred at the origin with radius $r>0$. Decreasing $r$, if necessary,  we may assume  that
\be\label{eq:deE-graph}
  E\cap Q_r=\lbrace(y',y_N)\in B_r^{N-1}\times \R\,:\, y_N> \gamma(y^\prime) \rbrace, 
\ee
with $\g\in \calC^{1, \b}(B_r^{N-1})$ satisfying
\begin{equation}\label{ParametrisationNonLocal}
\gamma(y^\prime)=O\left(|y^\prime|^{1+\b}\right).
\end{equation}
In the  following, for $f, g: \R \to \R$, we write $g(t):=O(f(t))$ if
$$
|g(t)| \leq C|f(t)|.
$$
We also write  $g(t)=o(f(t))$ if $g(t)=O(f(t))$ and moreover when $f(t) \neq 0 $, we have
$$
\lim_{t \to 0}\frac{|g(t)|}{|f(t)|}=0.
$$
We denote by $o_t(1)$ any function that tends to zero when $t \to 0$.
If in addition, $\de E$ is of class $\calC^3$, then for $y'\in B_r^{N-1}$, we have
\begin{equation}\label{star}
  \gamma(y^\prime)=\frac{1}{2}D^2\gamma(0) [y', y']+O\left(|y^\prime|^{3}\right)
\end{equation}
and the normalized mean curvature of $\de E$ at 0 is  given by
\be \label{eq:def-mc}
	H(0)=\frac{\D \gamma(0)}{N-1}=  \frac{N-1}{N+1}  \lim_{\e \to 0}\frac{1}{\e |Q_\e(x)|} \int_{   Q_\e(x)}\! \t_E(y) \,dy.
\ee
Recall that the unit exterior normal $\nu(y'):=\nu(y',\g(y'))$ of $E$ and the volume element $d\s(y')$ on $\de E\cap Q_r$ are given by 
	\begin{equation}\label{nu}
	\nu(y')=\frac{(-\nabla\gamma(y'),1)}{\sqrt{1+|\nabla\gamma(y')|^2}} \qquad
	\textrm{and} \qquad d \sigma(y')=\sqrt{1+|\nabla\gamma(y')|^2} dy^\prime.
	\end{equation}
We finally note,  in view of \eqref{KernelFormula} and \eqref{KernelEstimate}, that  we have 
\begin{equation}\label{KernelFormula-1}
0<  \K_s(y,t) \leq C  \frac{t}{|y|^{N+2s}} \qquad\textrm{ for all $y \in \R^N\setminus \{0\}$, $t>0$} ,
\end{equation}
for some positive constant $C=C(N,s)$.
We start with the following result.
\begin{lemma}\label{Lemma21}
Let  $s\in (0,1)$ and $E$ be a $\calC^{1,\b}$ hypersurface satisfying \eqref{eq:deE-graph}. Define
$$
w(z, t)=\int_{\R^N} \K_s(z-y,t) \t_E(y) dy.
$$
Then there exist $t_0,C>0$, only depending on $N,s,\b$ and $E$,  such that for all $t \in (0, t_0)$ and $z\in B_{t^{\frac{1}{2s}}}$,
\be \label{eq:nu_x}
 \frac{\de   w}{\de z_N} (z, t)\geq C t^{-1/2s}. 
\ee 
As a consequence, for all $t \in (0, t_0)$, the set 
\begin{equation}\label{Partie2}
\lbrace 
z\in \R^N\,:\, w(z, t)=0
\rbrace \cap B_{t^{\frac{1}{2s}}} \quad \textrm{is of class $C^1$.}
\end{equation}
\end{lemma}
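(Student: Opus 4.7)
The plan is to compute $\de_{z_N}w$ explicitly via integration by parts, recast the lower bound as a surface integral over $\de E$, and show that the mass concentrates near the origin. Writing $\t_E=2\1_E-1$ and noting $\int_{\R^N}\de_{z_N}\K_s(z-y,t)\,dy=0$ (since the total mass of $\K_s(\cdot,t)$ is $z$-independent), one obtains $\de_{z_N}w(z,t)=2\int_E\de_{z_N}\K_s(z-y,t)\,dy=-2\int_E\de_{y_N}\K_s(z-y,t)\,dy$, and the divergence theorem on the bounded set $E$ yields
\[
\frac{\de w}{\de z_N}(z,t)=-2\int_{\de E}\K_s(z-y,t)\,\nu_N(y)\,d\mathcal{H}^{N-1}(y),
\]
with $\nu$ the outer unit normal of $E$. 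Near the origin one has $\nu_N=-1/\sqrt{1+|\n\g|^2}$ (since $E$ lies above the graph of $\g$), so the integrand is positive in the near field.

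Splitting $\de E=(\de E\cap Q_r)\cup(\de E\setminus Q_r)$, the far-field contribution is easy: for $z\in B_{t^{1/(2s)}}$ and $t$ small, every $y\in\de E\setminus Q_r$ satisfies $|z-y|\ge r/2$, so \eqref{KernelFormula-1} bounds it by a constant multiple of $t$. On $\de E\cap Q_r$, parametrizing by $y=(y',\g(y'))$ makes the surface element $\sqrt{1+|\n\g|^2}\,dy'$ cancel $|\nu_N|^{-1}$, and the local contribution reduces to $2\int_{B_r^{N-1}}\K_s\bigl(z-(y',\g(y')),t\bigr)\,dy'$. Rescaling $y'=t^{1/(2s)}\eta'$ and applying \eqref{KernelFormula} transforms it into
\[
2\,t^{-1/(2s)}\int_{B_{rt^{-1/(2s)}}^{N-1}}P_s\bigl(\zeta'-\eta',\;\zeta_N-t^{-1/(2s)}\g(t^{1/(2s)}\eta')\bigr)\,d\eta'
\]
with $\zeta=t^{-1/(2s)}z\in B_1$. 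The $C^{1,\b}$ estimate \eqref{ParametrisationNonLocal} gives $|t^{-1/(2s)}\g(t^{1/(2s)}\eta')|\le Ct^{\b/(2s)}|\eta'|^{1+\b}$, uniformly bounded on any fixed ball $\{|\eta'|\le M\}$ for $t$ small. Restricting the rescaled integral to such an $M$-ball and invoking the lower bound on $P_s$ from \eqref{KernelEstimate} produces a positive constant $c_M$, independent of $z$ and $t$, so that the local piece is at least $2c_M t^{-1/(2s)}$, which dominates the $O(t)$ far-field remainder and yields $\de_{z_N}w(z,t)\ge Ct^{-1/(2s)}$.

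The $C^1$ regularity of $\{w(\cdot,t)=0\}\cap B_{t^{1/(2s)}}$ then follows from the implicit function theorem, since $w(\cdot,t)\in C^1(\R^N)$ by differentiation under the integral (the gradient bound in \eqref{KernelEstimate} makes $\n_x\K_s(\cdot,t)$ integrable). The main technical point is the uniform control of the rescaled graph $t^{-1/(2s)}\g(t^{1/(2s)}\eta')$: the hypothesis $\b>0$ is crucial because it supplies the shrinking prefactor $t^{\b/(2s)}$, without which the $P_s$ lower bound could not be invoked uniformly as $t\to 0$.
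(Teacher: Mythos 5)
Your proof is correct, and it takes a genuinely cleaner route than the paper's. Where you use $\tau_E=2\mathbbm{1}_E-1$ together with $\int_{\R^N}\partial_{z_N}\cK_s(z-y,t)\,dy=0$ (by translation invariance of the total mass) to reduce to $\partial_{z_N}w=-2\int_E\partial_{y_N}\cK_s(z-y,t)\,dy$, and then apply the divergence theorem on \emph{all} of $E$, the paper instead integrates by parts only on the ball $B_{r/2}(z)$, splitting $\partial_{z_N}w$ into $\int_{B_{r/2}(z)}+\int_{\R^N\setminus B_{r/2}(z)}$. The paper's local integration by parts produces an extra boundary term over $\partial B_{r/2}(z)$, which they must separately show is $O(t)$; your global version eliminates that term entirely, at the price of requiring $E$ to be bounded with a compact $\calC^{1,\beta}$ boundary (which holds in the paper's standing setting but would need care if $E$ were unbounded, where the paper's local version applies unchanged). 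After the surface-integral reduction, both arguments coincide: split $\partial E$ near/far, use \eqref{KernelFormula-1} to get $O(t)$ in the far field, and rescale $y'=t^{1/(2s)}\eta'$ together with the lower bound on $P_s$ in \eqref{KernelEstimate} to extract the $t^{-1/(2s)}$ main term; the uniform control of $t^{-1/(2s)}\gamma(t^{1/(2s)}\eta')$ via \eqref{ParametrisationNonLocal} is indeed the one place the $\calC^{1,\beta}$ regularity enters. One small remark: your sign convention ($\nu_N<0$ for the genuine outward normal of $E=\{y_N>\gamma\}$) differs from the paper's formula \eqref{nu} (which has $\nu_N>0$ and appears to be the upward, i.e.\ inward, normal); your bookkeeping is internally consistent and yields the same positive lower bound, so this is merely a notational mismatch with the paper rather than a gap.
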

\begin{proof}
We fix  $t>0$ small so that $t^{\frac{1}{2s}}<\frac{r}{8}$ and  let $z\in B_{t^{\frac{1}{2s}}}$.    We write   
\begin{equation}\label{PA1}
 \frac{\de   w}{\de z_N} (z, t)=\int_{\R^N}  \frac{\de \K_s}{\de z_N} (z-y, t) \t_E(y) dy= \int_{B_{\frac{r}{2}}(z)} \frac{\de \K_s}{\de y_N}(z-y,t) \t_E(y) dy+\int_{\R^N \setminus B_{\frac{r}{2}}(z)} \frac{\de \K_s}{\de y_N}(z-y, t) \t_E(y) dy.
\end{equation}
By a change of variable, \eqref{KernelFormula} and \eqref{KernelEstimate}, we have
\begin{align}\label{PA2}
\int_{\R^N \setminus B_{\frac{r}{2}}(z)} \frac{\de \K_s}{\de y_N}(z-y, t) dy&=t^{-\frac{1}{2s}} \int_{\R^N \setminus  t^{-\frac{1}{2s}}B_{{\frac{r}{2}}}(z)} \frac{\de P_s}{\de y_N}(t^{-\frac{1}{2s}}z-y) dy\nonumber\\
&= O\left(t^{-\frac{1}{2s}} \int_{\R^N \setminus t^{-\frac{1}{2s}} B_{{\frac{r}{2}} }(z)} {|t^{-\frac{1}{2s}}z -y|^{-N-1-2s}} dy\right)=O(t).
\end{align}
Integrating by parts, we have
\begin{align*}
\int_{B_{\frac{r}{2}}(z)} &\frac{\de \K_s}{\de y_N}(z-y, t) \t_E(y) dy= \int_{B_{\frac{r}{2}}(z) \cap E} \frac{\de \K_s}{\de y_N}(z-y, t) \t_E(y) dy-\int_{B_{\frac{r}{2}}(z) \cap E^c} \frac{\de \K_s}{\de y_N}(z-y, t) dy\\
&=2 \int_{B_{\frac{r}{2}}(z)\cap \de E } \K_s(z-y, t) e_N \cdot \nu_E(y) d\s(y)+\int_{\de B_r(z)} \K_s(z-y, t) e_N \cdot \nu_{B_{\frac{r}{2}}}(y) \t_E(y) d\s^\prime(y).
\end{align*}
By  a change of variable, \eqref{KernelFormula}, \eqref{KernelEstimate} and the fact that $Q_{r/8}\subset B_{r/4}\subset B_{\frac{r}{2}}(z) \subset Q_r$, we have
\begin{align}\label{PA3}
\int_{B_{\frac{r}{2}}(z) \cap \de E} \K_s(z-y, t) e_N \cdot \nu_E(y) d\s(y)&\geq C\int_{B_{r/8}^{N-1}} \K_s(z'-y^\prime, z_N-\gamma(y^\prime), t)dy^\prime \nonumber\\
 &=  Ct^{-\frac{1}{2s}} \int_{  B^{N-1}_{\frac{r}{8}t^{-\frac{1}{2s}}}} { P_s} (t^{-\frac{1}{2s}} z'-y',t^{-\frac{1}{2s}} z_N-t^{-\frac{1}{2s}}\g( t^{\frac{1}{2s}} y' )) dy' \nonumber\\
&\geq Ct^{-\frac{1}{2s}}\int_{B^{N-1}_{\frac{r}{8}t^{-\frac{1}{2s}}} \setminus B_2} \frac{dy^\prime}{1+|y^\prime|^{N+2s}},
\end{align}
provided $ r$. Next,  using  \eqref{KernelFormula-1} and recalling that $z\in B_{t^{\frac{1}{2s}}}$,
we then have
\begin{align*}
\left|\int_{\de B_{\frac{r}{2}}(z)} \K_s(z-y, t) e_N \cdot \nu_{B_{\frac{r}{2}}(z)}(y) d\s^\prime(y)\right|\leq \int_{\de B_{\frac{r}{2}}(z)} \K_s(z-y, t) d\s^\prime(y)&\leq t C \int_{\de B_{\frac{r}{2}}(z)} \frac{ 1}{|y|^{N+2s}} d\s^\prime(y) =O(t).
\end{align*}
From this and \eqref{PA3}, we deduce that 
$$
\int_{B_r(z)} \frac{\de \K_s}{\de y_N}(z-y, t) \t_E(y) dy \geq Ct^{-1/2s}.
$$
Combining this with \eqref{PA1} and  \eqref{PA2},  we get   
\begin{align*}
\frac{\de w}{\de z_N}(z, t)\geq Ct^{-1/2s}.
\end{align*}
Therefore \eqref{eq:nu_x} follows. Finally   \eqref{Partie2} follows   from the inverse function  theorem and the fact that $w$ is of class $C^1$ on $\R^N\times (0,\infty)$. 
\end{proof}

In the sequel, we will need the following lemmas to estimate some error terms.
\begin{lemma}\label{L1-Convergence}
	For $s\in(0,1)$, we let $E\subset\R^N$ be a set of class $\mathcal{C}^{1,\beta}$, for some $\beta>2s$, as in Section \ref{section4}.
	For $r>0$, we set
	$$
	J_{r}(t):= \int_{Q_r} \K_s(y, t) \t_E(y) dy \qquad \textrm{and}\qquad I_r(t)= \int_{\R^N \setminus Q_r} \left(t^{-1} \K_s(y,t)-\frac{C_{N,s}}{|y|^{N+2s}}\right) \t_E(y) dy.
	$$ 
	Then we have
	$$
	|J_r(t)|\leq C t r^{\b-2s} \qquad \textrm{and}\qquad \lim_{t \to 0} I_r(t)=0,
	$$
	where $C$ is a positive constant depending only on $N$, $\beta,$ $s$ and $E$.
\end{lemma}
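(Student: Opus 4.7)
The proof splits into two independent estimates, one for $J_r(t)$ and one for $I_r(t)$, with rather different flavors.

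For the bound on $J_r(t)$, my plan is to exploit the radial symmetry of $\K_s$, which makes $\K_s(\cdot,t)$ even in $y_N$, against the antisymmetry of the indicator of the tangent half-space. Set $H=\{y_N>0\}$ so that $\t_H(y)=\sign(y_N)$ is odd in $y_N$; the evenness of $\K_s$ and the symmetry of $Q_r$ then force $\int_{Q_r}\K_s(y,t)\,\t_H(y)\,dy=0$. By the graph representation \eqref{eq:deE-graph}, the difference $\t_E-\t_H$ is supported in the symmetric-difference strip
\[
S_r=\{(y',y_N)\in B_r^{N-1}\times\R : y_N\text{ lies strictly between }0\text{ and }\g(y')\},
\]
on which $|\t_E-\t_H|\le 2$. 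Combining the upper bound \eqref{KernelFormula-1} with the elementary inequality $|y|\ge|y'|$, I would then estimate
\[
|J_r(t)|\le 2\int_{B_r^{N-1}}\int_0^{|\g(y')|}\K_s(y',y_N,t)\,dy_N\,dy'\le C\,t\int_{B_r^{N-1}}\frac{|\g(y')|}{|y'|^{N+2s}}\,dy'.
\]
Plugging in the growth control \eqref{ParametrisationNonLocal}, i.e. $|\g(y')|\le C|y'|^{1+\b}$, and passing to polar coordinates in $\R^{N-1}$ reduces the right-hand side to a constant times $t\int_0^r\rho^{\b-2s-1}\,d\rho$. The hypothesis $\b>2s$ ensures integrability at $0$ and yields exactly the factor $r^{\b-2s}$ claimed.

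For $I_r(t)$, my plan is a straightforward dominated-convergence argument. On $\R^N\setminus Q_r$ one has $|y|\ge r$, and the pointwise bound \eqref{KernelFormula-1} gives
\[
\left|t^{-1}\K_s(y,t)-\frac{C_{N,s}}{|y|^{N+2s}}\right||\t_E(y)|\le\frac{C+C_{N,s}}{|y|^{N+2s}},
\]
which is integrable on $\R^N\setminus Q_r$ because $|y|^{-(N+2s)}$ is integrable outside any ball. The locally uniform convergence \eqref{ConvergenceUnifandLoc} supplies the pointwise convergence of the integrand to zero on $\R^N\setminus\{0\}$, and dominated convergence delivers $\lim_{t\to 0}I_r(t)=0$.

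The main obstacle is the symmetry cancellation in the first estimate: applying \eqref{KernelFormula-1} bluntly against the full indicator $\t_E$ would only yield an $O(t)$ bound with no $r$-dependence, losing the key factor $r^{\b-2s}$. Comparing with the tangent half-space indicator $\t_H$ to reduce the domain of integration to the thin strip $S_r$ of width $O(|y'|^{1+\b})$ is exactly what unlocks the sharper dependence on $r$, and the assumption $\b>2s$ is precisely the threshold under which the resulting radial integral near the origin is finite.
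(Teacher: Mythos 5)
Your proof is correct and follows essentially the same route as the paper: both reduce $J_r(t)$, via the evenness of $\K_s$ in $y_N$, to an integral over the thin strip between $\partial E$ and its tangent hyperplane (the paper does this by an explicit split-and-change-of-variables, you by comparing $\t_E$ with the half-space indicator $\t_H$ and using $\int_{Q_r}\K_s\,\t_H=0$ -- the same cancellation, slightly repackaged), then both apply \eqref{KernelFormula-1} and \eqref{ParametrisationNonLocal} with polar coordinates to get $Ctr^{\b-2s}$, and both prove $\lim_{t\to0}I_r(t)=0$ by dominated convergence from \eqref{KernelFormula-1} and \eqref{ConvergenceUnifandLoc}.
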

\begin{proof}
	Since $\tau_E=\1_E-\1_{\R^N\setminus \ov E}$, we get
	\begin{align*}
	J_r(t)&=\int_{Q_r\cap E}\K_s(y,t)dy-\int_{Q_r\cap E^c}\K_s(y,{t})dy\\\\
	&=\int_{B_r^{N-1}}\int_{\gamma(y')}^r\K_s((y',y_N),{t})dy_N dy'-\int_{B_r^{N-1}}\int_{-r}^{\gamma(y')}\K_s((y',y_N),{t})dy_Ndy'\\\\
	&=\int_{B_r^{N-1}}\Biggl(\int_{\gamma(y')}^r\K_s((y',y_N),{t})dy_N-\int_{-r}^{\gamma(y')}\K_s((y',y_N),{t})dy_N\Biggr)dy'\\\\
	&=\int_{B_r^{N-1}}\Biggl(\int_{\gamma(y')}^r\K_s((y',y_N),{t})dy_N-\int_{-r}^{-\gamma(y')}\K_s((y',y_N),{t})dy_N'
	-\int_{-\gamma(y')}^{\gamma(y')}\K_s((y',y_N),{t})dy_N\Biggr)dy'\\\\
	&=\int_{B_r^{N-1}}\Biggl(\int_{\gamma(y')}^r\K_s((y',y_N),{t})dy_N+\int_{r}^{\gamma(y')}\K_s((y',-y_N),{t})dy_N
	-\int_{-\gamma(y')}^{\gamma(y')}\K_s((y',y_N),{t})dy_N\Biggr)dy'.
	\end{align*}
	Since the map $y \longmapsto K_s(y,{t})$ is radial, we have $\K_s(y',y_N,{t})=\K_s(y',-y_N,{t})$ so that
	$$
	\int_{\gamma(y')}^r\K_s((y',y_N),{t})dy_N+\int_{r}^{\gamma(y')}\K_s((y',-y_N),{t})dy_N=0.
	$$
	Therefore
	$$
	J_r(t)=-\int_{B_r^{N-1}}\int_{-\gamma(y')}^{\gamma(y')}\K_s((y',y_N),{t})dy_Ndy'=-2 \int_{B_r^{N-1}}\int_{0}^{\gamma(y')}\K_s((y',y_N),{t})dy_Ndy'.
	$$
	Then, by \eqref{KernelFormula-1}, 
	\begin{align*}
	|J_r(t)|&\leq 2C_{N,s} {t}\left|\int_{B_r^{N-1}}\int_{0}^{\gamma(y')}\frac{1}{|(y',y_N)|^{N+2s}} dy_Ndy'\right| \leq C {t} r^{\beta-2s}.
	\end{align*}
	where $C$ is a positive constant depending on $N$, $\b$ and $s$ and which may change from a line to another.  Next,   using \eqref{KernelFormula-1}, \eqref{ConvergenceUnifandLoc} and the    dominate convergence theorem, we obtain 
	$$
	\lim_{t \to 0} I_r(t)=0.
	$$
	This then ends the proof.
\end{proof}

\begin{lemma}\label{DistanceEstimate}
	Let $s\in(0,1)$ and let $x=vte_N\in\partial E_t\cap B_{\s_s(t)^{\frac{1}{2s}}}$, with $E_t$ given by \eqref{eq:scaled-sets}. Then
\begin{align*}
vt=O\left((\sigma_s(t))^{\frac{1+2s}{2s}}\right) \qquad\textrm{ as $t\to 0$.}
\end{align*}
\end{lemma}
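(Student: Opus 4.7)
\medskip
\noindent\textbf{Proof proposal.} The plan is to locate $\partial E_t$ near the origin by combining the uniform vertical monotonicity of $w(\cdot,\sigma_s(t))$ from Lemma \ref{Lemma21} with a smallness estimate of $w$ evaluated at the reference point $0\in\partial E$, obtained from Lemma \ref{L1-Convergence}.

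First, recalling that $E_t=\{z:w(z,\sigma_s(t))>0\}$ and that $x=vt\,e_N$ lies on $\partial E_t$, we have $w(vt\,e_N,\sigma_s(t))=0$. Since $|vt|\le \sigma_s(t)^{1/(2s)}$ by hypothesis, the whole segment $\{\theta\, vt\,e_N:\theta\in[0,1]\}$ sits inside the ball $B_{\sigma_s(t)^{1/(2s)}}$ on which Lemma \ref{Lemma21} provides
\[
\frac{\partial w}{\partial z_N}(z,\sigma_s(t))\ge C\,\sigma_s(t)^{-1/(2s)}.
\]
The fundamental theorem of calculus along the $e_N$-axis then yields
\[
\bigl|w(vt\,e_N,\sigma_s(t))-w(0,\sigma_s(t))\bigr|\ge C\,\sigma_s(t)^{-1/(2s)}\,|vt|,
\]
so that, using $w(vt\,e_N,\sigma_s(t))=0$,
\[
|vt|\le C\,\sigma_s(t)^{1/(2s)}\,\bigl|w(0,\sigma_s(t))\bigr|.
\]

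The remaining task is to show $|w(0,\tau)|=O(\tau)$ as $\tau\to 0$. Splitting the integral defining $w(0,\tau)$ over $Q_r$ and $\R^N\setminus Q_r$ (and using the radial symmetry of $\K_s$ to identify the inner piece with $J_r(\tau)$), Lemma \ref{L1-Convergence} gives $|J_r(\tau)|\le C\,\tau\,r^{\beta-2s}$. For the outer piece, the pointwise bound \eqref{KernelFormula-1} implies $\tau^{-1}\K_s(y,\tau)\le C|y|^{-N-2s}$, which is integrable on $\R^N\setminus Q_r$; hence that term is also $O(\tau)$. Combining,
\[
|w(0,\sigma_s(t))|\le C\,\sigma_s(t),
\]
and plugging this into the displayed inequality above yields
\[
|vt|\le C\,\sigma_s(t)^{1/(2s)}\cdot\sigma_s(t)=C\,\sigma_s(t)^{(1+2s)/(2s)},
\]
which is the claim.

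I do not expect a serious obstacle: the proof is really a one-sided mean value argument, and the two analytic ingredients (the lower bound on $\partial_N w$ and the $O(\tau)$ smallness of $w$ at the interface point) have already been packaged in Lemma \ref{Lemma21} and Lemma \ref{L1-Convergence}. The only mild point to watch is that the segment from $0$ to $vt\,e_N$ stays inside the ball where Lemma \ref{Lemma21} applies, but this is immediate from the standing assumption $|vt|\le\sigma_s(t)^{1/(2s)}$.
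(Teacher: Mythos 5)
Your proposal is correct and follows essentially the same path as the paper: expand $w$ along the $e_N$-axis via the fundamental theorem of calculus, invoke the lower bound $\partial_N w \ge C\,\sigma_s(t)^{-1/(2s)}$ from Lemma \ref{Lemma21}, and bound $|w(0,\sigma_s(t))|=O(\sigma_s(t))$ by splitting over $Q_r$ and its complement using Lemma \ref{L1-Convergence} and \eqref{KernelFormula-1}. The one extra observation you make explicit --- that the segment from $0$ to $vt\,e_N$ stays inside the ball where the derivative bound holds --- is indeed needed and is left implicit in the paper's proof.
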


\begin{proof}
	Since $x=vte_N\in\partial E_t$, we have that  $u(x,\sigma_s(t))=0.$
	By the fundamental theorem of calculus, we have
	\begin{align*}
	u(x,\sigma_s(t))=u(0,\sigma_s(t))+vt\int_0^1\frac{\partial  u}{\partial x_N}(\theta vte_N,\sigma_s(t))d\theta=0
	\end{align*}
so that
	\begin{align}\label{AD1}
	vt\int_0^1\frac{\partial  u}{\partial x_N}(\theta vte_N,\sigma_s(t))d\theta=-u(0,\sigma_s(t)).
	\end{align}
We write 
$$
u(0,\sigma_s(t))=\int_{\R^N} \K_s(y,\sigma_s(t)) \t_E(y) dy= \int_{Q_r} \K_s(y,\sigma_s(t)) \t_E(y) dy+\int_{Q_r^c} \K_s(y,\sigma_s(t)) \t_E(y) dy.
$$
Then by  Lemma  \ref{L1-Convergence} and \eqref{KernelFormula-1}, we have
\begin{align}\label{AD3}
\left|\int_{Q_r} \K_s(y,\sigma_s(t)) \t_E(y) dy\right| \leq C \s_s(t) \qquad \textrm{ and } \qquad\int_{Q_r^c} \K_s(y,\sigma_s(t)) \t_E(y) dy=O(\sigma_s(t))
\end{align}
for some constant $C$ depending on $r$. Furthermore by \eqref{eq:nu_x}, we have
\begin{align}\label{AD4}
\frac{\partial  u}{\partial x_N}(\theta vte_N,\sigma_s(t))\geq C(\sigma_s(t))^{-1/2s}.
\end{align}
Therefore, the result immediately follows from \eqref{AD1}, \eqref{AD3} and \eqref{AD4}.  
\end{proof}
\begin{lemma}\label{lemmaComplement22}
Under the assumptions of Lemma \ref{DistanceEstimate},   we have
	$$
	\int_0^1\int_{B^{N-1}_r} \int_0^{\gamma(y^\prime)-vt} y_N \frac{\de \cK_s}{\de y_N}(y^\prime, \theta y_N, \sigma_s(t)) dy d\theta=O\left(\sigma_s(t)\right) \qquad \textrm{ as $t \to 0$.}
	$$
\end{lemma}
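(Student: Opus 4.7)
The plan is to estimate the triple integral directly via a pointwise bound on $\de\cK_s/\de y_N$ combined with sharp control on the length of the inner interval $[0,\g(y')-vt]$. Write $\tau:=\s_s(t)$ for brevity. The scaling \eqref{KernelFormula} together with the derivative estimate in \eqref{KernelEstimate} yields
\[
\left|\frac{\de \cK_s}{\de y_N}(y,\tau)\right|\leq \frac{C\,\tau}{(\tau^{1/s}+|y|^2)^{(N+2s+1)/2}},\qquad y\in\R^N,
\]
and the same bound, after simply discarding the $y_N^2$ contribution in the denominator (which only enlarges the right--hand side), holds at the point $(y',\theta y_N)$ uniformly in $\theta\in[0,1]$.

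Next, \eqref{ParametrisationNonLocal} and Lemma \ref{DistanceEstimate} give $|\g(y')|+|vt|\leq C|y'|^{1+\b}+C\tau^{(1+2s)/(2s)}$. Since the integrand carries an extra factor $y_N$ whose size is itself bounded by this length, I would deduce
\[
\left|\int_0^{\g(y')-vt} y_N\,\frac{\de \cK_s}{\de y_N}(y',\theta y_N,\tau)\,dy_N\right|\leq C\bigl(|y'|^{2+2\b}+(vt)^2\bigr)\,\frac{\tau}{(\tau^{1/s}+|y'|^2)^{(N+2s+1)/2}}.
\]
Integrating in $y'$ over $B^{N-1}_r$ and rescaling $y'=\tau^{1/(2s)}u'$, the leading piece becomes
\[
\tau\cdot \tau^{(2\b-2s)/(2s)}\int_0^{r\tau^{-1/(2s)}}\frac{u^{N+2\b}}{(1+u^2)^{(N+2s+1)/2}}\,du.
\]
In each of the regimes appearing in Theorem \ref{MainResult} we have $\b>s$ (for $s\in(0,1/2)$ by hypothesis $\b>2s$; for $s\in[1/2,1)$ the $\calC^{3}$ expansion \eqref{star} provides the effective exponent $\b=1>s$), so the $u$-integral blows up like $R^{2\b-2s}$ with $R=r\tau^{-1/(2s)}$, and this divergence is cancelled precisely by the prefactor $\tau^{(2\b-2s)/(2s)}$, leaving an $O(\tau)$ contribution. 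For the remaining $(vt)^2$-piece, the same rescaling shows that the $y'$-integral is a bounded multiple of $\tau^{-1-1/s}$, and multiplication by $(vt)^2=O(\tau^{(1+2s)/s})$ produces $O(\tau^2)$. Integration in $\theta\in[0,1]$ adds at most a universal constant, and the estimate $O(\s_s(t))$ follows.

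The main difficulty is to track the sharp cancellation between the self--similar prefactor and the divergent $u$-integral: a cruder bound which either forgets the quadratic factor $(|\g|+|vt|)^2$ arising from $\int_0^{\g-vt} y_N\,dy_N$, or uses a non--sharp size of $vt$, would lose exactly one power of $\tau$. In this respect the estimate $|vt|=O(\tau^{(1+2s)/(2s)})$ from Lemma \ref{DistanceEstimate} is essential, since it is what guarantees that the $(vt)^2$-piece does not contaminate the leading $O(\tau)$ term.
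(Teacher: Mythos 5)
Your proof follows essentially the same route as the paper: both bound $\frac{\de P_s}{\de y_N}$ using \eqref{KernelEstimate} (discarding the $y_N$-contribution in the denominator), compute the inner $y_N$-integral explicitly to produce the quadratic factor $(\gamma(y')-vt)^2$, rescale $y'$ by $\sigma_s(t)^{1/(2s)}$, and invoke Lemma \ref{DistanceEstimate} to neutralize the $vt$-dependent pieces. Where the paper stops at the decomposition $O(\sigma_s(t))+O(v^2t^2\sigma_s(t)^{-1/s})+O(vt)$ and then applies Lemma \ref{DistanceEstimate}, you fold that application directly into the computation and also make the cancellation between the divergent $u'$-integral ($\asymp R^{2\beta-2s}$) and the self-similar prefactor $\tau^{1+(\beta-s)/s}$ visible; this is slightly more detailed than the paper's terse $O(\sigma_s(t))$ for the $\gamma^2$-term, but it is the same argument. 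One harmless arithmetic slip: in the $(vt)^2$-piece the $y'$-integral $\int_{B_r^{N-1}}\frac{\tau\,dy'}{(\tau^{1/s}+|y'|^2)^{(N+2s+1)/2}}$ scales like $\tau^{-1/s}$, not $\tau^{-1-1/s}$; the conclusion $O(\tau^2)$ you then state corresponds to the correct exponent $\tau^{-1/s}$, so the final estimate is unaffected.
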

\begin{proof}
	Let $\theta \in [0, 1]$. By \eqref{KernelFormula},  \eqref{KernelEstimate} and a   change of variable, we have
	\begin{align*}
&	\int_{B^{N-1}_r} \int_0^{\gamma(y^\prime)-vt} y_N \frac{\de \cK_s}{\de y_N} (y^\prime, \theta y_N, \sigma_s(t)) dy\\
&=(\sigma_s(t))^{\frac{-N-1}{2s}}\int_{B^{N-1}_r} \int_0^{\gamma(y^\prime)-vt} y_N \frac{\de P_s}{\de y_N} (y'(\sigma_s(t))^{-\frac{1}{2s}}, \theta y_N(\sigma_s(t))^{-\frac{1}{2}}) dy\\
	&\leq C \int_{B^{N-1}_{r(\sigma_s(t))^{-1/2s}}} \int_0^{(\sigma_s(t))^{-1/2s}\left(\gamma((\sigma_s(t))^{1/2s}y^\prime)-vt\right)} \frac{y_N}{1+|y^\prime|^{N+2s+1}} dy\\
	&\leq C (\sigma_s(t))^{-1/s}\int_{B^{N-1}_{r(\sigma_s(t))^{-1/2s}}} \frac{\left(\gamma((\sigma_s(t))^{1/2s}y^\prime)-vt\right) ^2}{1+|y^\prime|^{N+2s+1}} dy\\&=O\left(\sigma_s(t)\right)+O\left(v^2t^2(\sigma_s(t))^{-1/s}\right)+O\left(vt\right).
	\end{align*}
	Applying   Lemma \ref{DistanceEstimate}, we get  
	$$\int_0^{\gamma(y^\prime)-vt} y_N \frac{\de \cK_s}{\de y_N}(y^\prime, \theta y_N, \sigma_s(t)) dy d\theta=O\left(\sigma_s(t)\right),
	$$
	as $t \to 0$.
	This then ends the proof.
\end{proof}
\section{Proof of Theorem \ref{MainResult} in the case $s\in(0,\frac{1}{2})$}\label{section1}
In this section, we start by the following preliminary result.
\begin{lemma}\label{Lemma2}
	Let $s \in (0, 1/2)$. We assume that $E$ is of class $\calC^{1,\b}$ for some $\b >2s$ satisfying \eqref{eq:deE-graph}. Then, for all $\theta \in [0,1]$,  we have
	\begin{equation}\label{Partie1}
	\int_{\R^N}\frac{\partial\K_s}{\partial y_N}(y^\prime, y_N-vt\theta,\sigma_s(t))\tau_E(y)dy=
	2 (\sigma_s(t))^{-1/2s} \int_{\R^{N-1}} P_s(y^\prime, 0)dy^\prime+O\left((\sigma_s(t))^{\frac{2s-1}{2s}}\right).
	\end{equation}
\end{lemma}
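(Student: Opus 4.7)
The plan is to separate the integral over $\R^N$ into a piece over $Q_r$ (a neighborhood of the origin where $\partial E$ is parametrized by the graph $\gamma$) and its complement, and to handle each region separately. Set $\tau := \sigma_s(t)$ for brevity, so that the target reads $2\tau^{-1/2s}\int_{\R^{N-1}} P_s(y',0)dy' + O(\tau^{(2s-1)/(2s)})$, and note $(2s-1)/(2s)<1$, so errors of size $O(\tau)$ are absorbed.

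First, on $\R^N\setminus Q_r$, the scaling \eqref{KernelFormula} together with the gradient bound in \eqref{KernelEstimate} give $|\partial_{y_N}\K_s(y,\tau)|\le C\tau/|y|^{N+2s+1}$, so the corresponding integral is $O(\tau)$. On $Q_r$, I would exploit the fact that $\tau_E=\pm 1$ with jump across the graph $\{y_N=\gamma(y')\}$ and perform a Newton-Leibniz integration in $y_N$ over $(-r,r)$. For each $y'\in B_r^{N-1}$, this yields
\begin{equation*}
\int_{-r}^r \partial_{y_N}\K_s(y',y_N-vt\theta,\tau)\tau_E(y',y_N)\,dy_N = \K_s(y',r-vt\theta,\tau)+\K_s(y',-r-vt\theta,\tau) - 2\K_s(y',\gamma(y')-vt\theta,\tau),
\end{equation*}
where, after integrating in $y'\in B_r^{N-1}$, the first two endpoint terms are $O(\tau)$ by \eqref{KernelFormula-1} since $r$ is fixed, and thus are absorbed in the error. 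The reduction of the problem therefore comes down to estimating (up to sign conventions) the interface integral $2\int_{B_r^{N-1}}\K_s(y',\gamma(y')-vt\theta,\tau)\,dy'$.

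I would then rescale via $y'=\tau^{1/2s}z'$, using \eqref{KernelFormula}:
\begin{equation*}
\int_{B_r^{N-1}}\K_s(y',\gamma(y')-vt\theta,\tau)\,dy' = \tau^{-1/2s}\int_{B_{r\tau^{-1/2s}}^{N-1}} P_s(z',\xi_\tau(z'))\,dz',\qquad \xi_\tau(z') := \tau^{-1/2s}\bigl[\gamma(\tau^{1/2s}z')-vt\theta\bigr].
\end{equation*}
Write $P_s(z',\xi_\tau)=P_s(z',0)+[P_s(z',\xi_\tau)-P_s(z',0)]$; the leading term yields $\int_{\R^{N-1}}P_s(z',0)\,dz'$ modulo a tail on $|z'|>r\tau^{-1/2s}$ of size $O(\tau^{(1+2s)/(2s)})$ by \eqref{KernelEstimate}. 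For the increment, the mean value theorem and the gradient bound in \eqref{KernelEstimate} give $|P_s(z',\xi_\tau)-P_s(z',0)|\le C|\xi_\tau(z')|/(1+|z'|^{N+2s+1})$, and combining \eqref{ParametrisationNonLocal} with Lemma \ref{DistanceEstimate} yields the uniform pointwise control $|\xi_\tau(z')|\le C\tau^{\beta/(2s)}|z'|^{1+\beta}+C\tau$. The constant-$\tau$ part integrates to $O(\tau)$, while the $\tau^{\beta/(2s)}|z'|^{1+\beta}$ part produces $\tau^{\beta/(2s)}\int_0^{r\tau^{-1/2s}}\rho^{N-1+\beta}/(1+\rho^{N+2s+1})\,d\rho$. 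A case analysis on $\beta$ versus $2s+1$ (subcritical, critical with a logarithmic factor, supercritical where the integral blows up like $\tau^{-(\beta-2s-1)/(2s)}$) shows that in each case the total is $O(\tau)$ or better. Multiplying through by $\tau^{-1/2s}$ converts these $O(\tau)$ bounds into the claimed $O(\tau^{(2s-1)/(2s)})$.

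The main obstacle is the final error analysis: the growth factor $|z'|^{1+\beta}$ from $\gamma$ needs to be tamed by the decay $(1+|z'|^{N+2s+1})^{-1}$ from $\nabla P_s$ over a ball whose radius diverges like $\tau^{-1/2s}$. The hypothesis $\beta>2s$ is exactly what makes the prefactor $\tau^{\beta/(2s)}$ dominate the potentially divergent spatial integral and still deliver an $O(\tau)$ bound; weakening it would degrade the error or invalidate the expansion.
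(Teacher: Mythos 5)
Your proposal is correct and follows essentially the same route as the paper's proof: isolate the far-field contribution as an $O(\sigma_s(t))$ remainder, reduce the near-field to the interface integral $\int_{B_r^{N-1}}\K_s(y',\gamma(y')-vt\theta,\sigma_s(t))\,dy'$, then rescale and Taylor-expand $P_s$ in its last slot, using the gradient bound in \eqref{KernelEstimate} together with \eqref{ParametrisationNonLocal} and Lemma \ref{DistanceEstimate} to control the increment. The differences from the paper are cosmetic: you integrate over the cylinder $Q_r$ by a direct Newton--Leibniz computation in $y_N$, while the paper integrates over the ball $B_r$ and applies the divergence theorem, producing an extra $\partial B_r$ boundary term that is also $O(\sigma_s(t))$; and you rescale first and then expand $P_s$, whereas the paper applies the fundamental theorem of calculus to $\K_s$ and then rescales --- these are equivalent. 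Two small remarks. First, the case analysis on $\beta$ versus $2s+1$ is superfluous: a $C^{1,\beta}$ graph has $\beta\le 1<1+2s$, so $\int_{\R^{N-1}}|z'|^{1+\beta}(1+|z'|^{N+2s+1})^{-1}\,dz'$ always converges; this is what the paper uses tacitly when it extends the rescaled domain to all of $\R^{N-1}$. Second, your Newton--Leibniz identity produces the interface term with coefficient $-2$, which you then set aside ``up to sign conventions''; this sign ought to be pinned down against \eqref{nu}, although the paper's own sign bookkeeping for $E=\{y_N>\gamma(y')\}$ with the choice of $\nu$ in \eqref{nu} is equally loose, so this is a shared imprecision rather than a gap specific to your argument.
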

\begin{proof}
	We have
	\begin{align*}
	\int_{\R^N}\frac{\partial \K_s}{\partial y_N}(y^\prime, y_N-vt\theta,\sigma_s(t))\tau_E(y)dy&=\int_{B_r}\frac{\partial \K_s}{\partial y_N}(y^\prime, y_N-vt\theta,\sigma_s(t))\tau_E(y)dy\\\\
	&+\int_{B_r^c}\frac{\partial \K_s}{\partial y_N}(y^\prime, y_N-vt\theta,\sigma_s(t))\tau_E(y)dy,
	\end{align*}
	where $B_r$ is the ball of $\R^N$ centered at the origin and of radius $r>0$.
	By integration by parts, we have
	\begin{align*}
	\int_{B_r}\frac{\partial \K_s}{\partial y_N}(y^\prime, y_N-vt\theta,\sigma_s(t))\tau_E(y)dy&=2 \int_{\de E \cap B_r} \K_s(y^\prime, y_N-vt\theta,\sigma_s(t))\nu_N(y)d \sigma(y)\nonumber\\\\
	&+\int_{\de B_r} \K_s(y^\prime, y_N-vt\theta, \sigma_s(t))e_N\cdot\nu_{B_r}(y)\t_E(y) d \sigma^\prime(y).
	\end{align*}
	Therefore
	\begin{align}\label{l0}
	\int_{\R^N}&\frac{\partial \K_s}{\partial y_N}(y^\prime, y_N-vt\theta,\sigma_s(t))\tau_E(y)dy=2\int_{\partial E\cap B_r}\K_s(y^\prime, y_N-vt\theta,\sigma_s(t))\nu_N(y') d\sigma(y')\nonumber\\\
	&+\int_{B_r^c}\frac{\partial \K_s}{\partial y_N}(y^\prime, y_N-vt\theta,\sigma_s(t))\tau_E(y)dy+\int_{\de B_r} \K_s(y^\prime, y_N-vt\theta,\sigma_s(t))\frac{y_N}{r}\t_E(y) d \sigma^\prime(y).
	\end{align}
 Then by a change of variable  and \eqref{nu}, we have
	$$
	\int_{\partial E\cap B_r}\K_s(y^\prime, y_N-vt\theta,\sigma_s(t))\nu_N(y) d\sigma(y)=\int_{B_r^{N-1}}\K_s(y',\gamma(y')-vt\theta,\sigma_s(t)) dy'.
	$$
	By the Fundamental Theorem of calculus, we can write
	\begin{equation}\label{Expansionnn}
	\K_s(y',\gamma(y')-vt\theta,\sigma_s(t))=\K_s(y',0,\sigma_s(t))+\left(\gamma(y')-vt\theta\right)\int_0^1 \frac{\partial\K_s}{\partial y_N}(y',\theta^\prime\left(\gamma(y^\prime)-vt\theta)\right),\sigma_s(t)) d\theta^\prime.
	\end{equation}
	In the following, we let
	\begin{equation}\label{p0}
	\e(y^\prime):=\gamma(y^\prime)-vt\theta.
	\end{equation}
	Then we have
	\begin{align}
&	\int_{\partial E\cap B_r}\K_s(y^\prime, y_N-vt\theta,\sigma_s(t))\nu_N(y) d\sigma(y) =\int_{B_r^{N-1}}\K_s(y',\e(y^\prime),\sigma_s(t)) dy' \nonumber\\
	&=\int_{B_r^{N-1}}\K_s(y',0,\sigma_s(t)) dy'+\int_0^1 \int_{B_r^{N-1}} \e(y^\prime) \frac{\partial\K_s}{\partial y_N}(y',\theta^\prime\e(y^\prime),\sigma_s(t)) dy^\prime d\theta^\prime. \label{R0}
	\end{align}
	Therefore
	By a change of variable, \eqref{KernelFormula} and \eqref{KernelEstimate}, we have
	\begin{align}
	\int_{B_r^{N-1}}\K_s(y',0,\sigma_s(t))dy'&=\int_{B_r^{N-1}}(\sigma_s(t))^{\frac{-N}{2s}}P_s(y'(\sigma_s(t))^{-1/2s},0)dy' \nonumber\\
	&=(\sigma_s(t))^{-1/2s}\int_{\R^{N-1}}P_s(y',0)dy'+(\sigma_s(t))^{-1/2s}\int_{\R^{N-1}\setminus B_{r(\sigma_s(t))^{-1/2s}}^{N-1}}P_s(y',0)dy' \nonumber\\
	&=(\sigma_s(t))^{-1/2s}\int_{\R^{N-1}}P_s(y',0)dy'+O\left((\sigma_s(t))^{-1/2s}(\sigma_s(t))^{\frac{1+2s}{2s}}\right). \label{R2}
	\end{align}
	By  a change of variable, \eqref{KernelEstimate} and \eqref{p0}, we have
	\begin{align*}
&	\int_{B_r^{N-1}} \e(y^\prime) \frac{\partial\K_s}{\partial y_N}(y',\theta^\prime\e(y^\prime),\sigma_s(t)) dy^\prime\\
&=(\sigma_s(t))^{-1/s}\int_{B_{r(\sigma_s(t))^{-1/2s}}^{N-1}} \e(y^\prime (\sigma_s(t))^{1/2s}) \frac{\partial P_s}{\partial y_N}(y',\theta^\prime (\sigma_s(t))^{-1/2s}\e(y^\prime (\sigma_s(t))^{1/2s})) dy^\prime.
	\end{align*}
	We use \eqref{ParametrisationNonLocal}, \eqref{p0} and  Lemma \ref{DistanceEstimate} to get
	\begin{align*}
	(\sigma_s(t))^{-1/s} \e(y^\prime (\sigma_s(t))^{1/2s})&= O\left(|y^\prime|^{1+\b} (\sigma_s(t))^{\frac{\b-1}{2s}}\right)-vt\theta (\sigma_s(t))^{-1/s}\\&=O\left(|y^\prime|^{1+\b} (\sigma_s(t))^{\frac{2s-1}{2s}}\right)+ O\left((\sigma_s(t))^{\frac{2s-1}{2s}}\right)\qquad \textrm{in } B_{r(\sigma_s(t))^{-1/2s}}^{N-1}.
	\end{align*}
	Then by \eqref{KernelEstimate}, we have
	\begin{align*}
&	\int_{B_r^{N-1}} \e(y^\prime) \frac{\partial\K_s}{\partial y_N}(y',\theta^\prime\e(y^\prime),\sigma_s(t)) dy^\prime \nonumber\\
	&=O\left((\sigma_s(t))^{\frac{2s-1}{2s}}\int_{B_{r(\sigma_s(t))^{-1/2s}}^{N-1}}|y'|^{1+\beta}\frac{\partial P_s}{\partial y_N}(y',\theta^\prime (\sigma_s(t))^{-1/2s}\e(y^\prime (\sigma_s(t))^{1/2s})) dy^\prime\right) \nonumber\\
	&+O\left((\sigma_s(t))^{\frac{2s-1}{2s}}\int_{B_{r(\sigma_s(t))^{-1/2s}}^{N-1}}\frac{\partial P_s}{\partial y_N}(y',\theta^\prime (\sigma_s(t))^{-1/2s}\e(y^\prime (\sigma_s(t))^{1/2s})) dy^\prime\right) \nonumber\\
	&=O\left((\sigma_s(t))^{\frac{2s-1}{2s}}\int_{\R^{N-1}}\frac{1+|y'|^{1+\beta}}{1+|y^\prime|^{N+2s+1}} dy^\prime\right) =O\left((\sigma_s(t))^{\frac{2s-1}{2s}}\right).
	\end{align*}

	Hence
	\begin{equation}\label{R4}
	\int_0^1 \int_{B_r^{N-1}} \e(y^\prime) \frac{\partial\K_s}{\partial y_N}(y',\theta^\prime\e(y^\prime),\sigma_s(t)) dy^\prime d\theta^\prime=O\left((\sigma_s(t))^{\frac{2s-1}{2s}}\right) \qquad \textrm{as } t\to 0.
	\end{equation}
	It follows from \eqref{R0}, \eqref{R2} and \eqref{R4} that
	\begin{align}\label{b1}
	\int_{\partial E\cap B_r}\K_s(y^\prime, y_N-vt\theta,\sigma_s(t))\nu_N(y) d\sigma(y)=(\sigma_s(t))^{-1/2s}\int_{\R^{N-1}}P_s(y',0)dy' +O\left((\sigma_s(t))^{\frac{2s-1}{2s}}\right)\qquad \textrm{as } t\to 0.
	\end{align}
	%
	%
	%
	%
	%
By a change of variable  and the fact that $|\t_E(y)|\leq 1$, we have
	\begin{align}\label{Elk}
	\int_{B_r^c}\frac{\de \K_s}{\de y_N}(y^\prime, y_N-vt\theta,\sigma_s(t))\t_E(y) d y&=O\left((\sigma_s(t))^{-1/2s}\int_{B_{r(\sigma_s(t))^{-1/2s}}^c}\frac{\de P_s}{\de y_N}(y^\prime, y_N-vt(\sigma_s(t))^{-1/2s}\theta) d y\right)\nonumber\\\
	&=O\left((\sigma_s(t))^{-1/2s}\int_{B_{r(\sigma_s(t))^{-1/2s}}^c}\frac{1}{1+|y|^{N+2s+1}}dy\right)=O(\sigma_s(t)).
	\end{align}
	We use \eqref{ConvergenceUnifandLoc} to get, as $t\to0$,
	\begin{align*}
	\left|\int_{\de B_r} \K_s(y^\prime,y_N-vt\theta,\sigma_s(t))\frac{y_N}{r}\t_E(y) d \sigma^\prime(y)\right|&\leq \int_{\de B_r} \K_s(y^\prime,y_N-vt\theta,\sigma_s(t)) d \sigma^\prime(y)\\\
	&\leq \sigma_s(t) \int_{\de B_r} \frac{C_{N,s}}{|(y^\prime,y_N-vt\theta)|^{N+2s}} d \sigma^\prime(y) =O(\sigma_s(t)).
	\end{align*}
	Therefore, the expansion \eqref{Partie1} follows immediately from \eqref{l0}, \eqref{b1}, \eqref{Elk} and the above estimate. This ends the proof.
\end{proof}
The following result completes the proof of Theorem \ref{MainResult}  in the case $s\in (0,1/2)$. 
\begin{proposition}\label{Prop01/2}
Under the assumptions of Lemma \ref{Lemma2},  we have
\begin{equation}\label{NormalvelocityFract}
v=a_{N,s} H_s(0)+o_t\left(1\right) \qquad \textrm{ as } t \to 0,
\end{equation}
where $H_s(0)$ is the fractional mean curvature of $\de E$ at the point $0$ and the positive constant $a_{N,s}$ is given by
$$
a_{N,s}=\frac{C_{N,s} }{\displaystyle2 \int_{\R^{N-1}} P_s\left(y^\prime,0\right) dy^\prime}.
$$
\end{proposition}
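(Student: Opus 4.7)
The plan is to convert the zero-set condition $u(vte_N,\sigma_s(t))=0$ into an asymptotic identity for $v$ and then substitute the estimates provided by the preparatory lemmas. Applying the fundamental theorem of calculus to $\theta\mapsto u(\theta vte_N,\sigma_s(t))$ and using the defining property of $v$ gives
\begin{equation*}
-u(0,\sigma_s(t)) \;=\; vt\int_0^1 \frac{\partial u}{\partial x_N}(\theta vte_N,\sigma_s(t))\,d\theta.
\end{equation*}
A translation $\tilde y=y-\theta vte_N$ together with the radial symmetry of $\cK_s$ relates $\partial u/\partial x_N(\theta vte_N,\sigma_s(t))$ to the integral estimated in Lemma~\ref{Lemma2}, the overall sign being pinned down by the positivity provided by Lemma~\ref{Lemma21}. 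Consequently the denominator in the identity above satisfies
$$\int_0^1\frac{\partial u}{\partial x_N}(\theta vte_N,\sigma_s(t))\,d\theta = 2\sigma_s(t)^{-\frac{1}{2s}}\int_{\R^{N-1}}P_s(y',0)\,dy' + O\bigl(\sigma_s(t)^{\frac{2s-1}{2s}}\bigr),$$
where the remainder is of strictly smaller order because $s<1/2$.

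The numerator $u(0,\sigma_s(t))=\int\cK_s(y,\sigma_s(t))\tau_E(y)\,dy$ (using radiality of $\cK_s$) is analysed by splitting the domain at $Q_r$. The inner piece is exactly $J_r(\sigma_s(t))$ of Lemma~\ref{L1-Convergence}, hence bounded by $C\sigma_s(t)r^{\beta-2s}$. The outer piece, again by Lemma~\ref{L1-Convergence}, equals $\sigma_s(t)\bigl[\int_{Q_r^c}C_{N,s}\tau_E(y)|y|^{-N-2s}\,dy + I_r(\sigma_s(t))\bigr]$ with $I_r(\sigma_s(t))\to 0$. Finally, to recognise the truncated integral as $C_{N,s}H_s(0)$, I would write $H_s(0)$ as the limit $\lim_{\eps\to 0}\int_{|y|>\eps}\tau_E(y)|y|^{-N-2s}\,dy$, split $\{|y|>\eps\}$ into $Q_r^c\cup(Q_r\cap\{|y|>\eps\})$, and apply the same antisymmetry-in-$y_N$ cancellation that proved $|J_r(t)|\le Ctr^{\beta-2s}$ in Lemma~\ref{L1-Convergence}, now with the singular kernel $|y|^{-N-2s}$ in place of $\cK_s$. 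This produces $\int_{Q_r^c}\tau_E(y)|y|^{-N-2s}\,dy=H_s(0)+O(r^{\beta-2s})$ and hence $u(0,\sigma_s(t))=\sigma_s(t)\bigl(C_{N,s}H_s(0)+O(r^{\beta-2s})+o_t(1)\bigr)$.

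Inserting both asymptotics into the FTC identity, the $t$-dependent factors collapse thanks to the identity $\sigma_s(t)\cdot\sigma_s(t)^{\frac{1}{2s}}=\sigma_s(t)^{\frac{1+2s}{2s}}=t$ (which is precisely the defining property of the scaling \eqref{scaling} for $s\in(0,1/2)$), and dividing by $t$ and by $2\int_{\R^{N-1}}P_s(y',0)\,dy'$ yields $v=a_{N,s}H_s(0)+O(r^{\beta-2s})+o_t(1)$. Sending $t\to 0$ first and then $r\to 0$ (permissible since $\beta>2s$) gives \eqref{NormalvelocityFract}. The main obstacle is the reduction of the cut-off integral on $Q_r^c$ to the principal value $H_s(0)$, which is the one place where the $\calC^{1,\beta}$ regularity enters quantitatively: the bound $\int_{B_r^{N-1}}\int_0^{\gamma(y')}|(y',y_N)|^{-N-2s}\,dy_N\,dy'=O(r^{\beta-2s})$ is exactly the $J_r$-estimate of Lemma~\ref{L1-Convergence} transferred to the non-integrable kernel $|y|^{-N-2s}$, and it is finite precisely because $\beta>2s$.
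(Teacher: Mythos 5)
Your overall strategy matches the paper's: expand the defining relation $u(vte_N,\sigma_s(t))=0$ via the fundamental theorem of calculus in the $e_N$-direction, estimate the derivative term with Lemma~\ref{Lemma2}, estimate $u(0,\sigma_s(t))$ with Lemma~\ref{L1-Convergence}, and take the limits in $t$ and then in $r$. The only structural difference is that you apply the FTC directly to $x\mapsto u(x,\sigma_s(t))$, whereas the paper applies it to $y_N\mapsto\K_s((y',y_N),\sigma_s(t))$ inside the convolution; these are equivalent once one differentiates under the integral sign. Your quantitative version of the principal-value truncation, $\int_{Q_r^c}\tau_E(y)|y|^{-N-2s}dy = H_s(0)+O(r^{\beta-2s})$, is a correct and slightly sharper rendering of the step the paper handles via limsup in $t$ and then $r$, and it does indeed follow from the same antisymmetry cancellation as in Lemma~\ref{L1-Convergence} applied to the radial kernel $|y|^{-N-2s}$.

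There is one point you should nail down, because as written your own displayed identities do not produce the stated sign. You correctly derive $-u(0,\sigma_s(t)) = vt\int_0^1 \partial_{x_N}u(\theta vte_N,\sigma_s(t))\,d\theta$, and you invoke Lemma~\ref{Lemma21} to assert $\int_0^1\partial_{x_N}u\,d\theta = 2\sigma_s(t)^{-1/(2s)}\int_{\R^{N-1}}P_s(y',0)\,dy'+O(\cdot)>0$. Combining this with $u(0,\sigma_s(t))=\sigma_s(t)\big(C_{N,s}H_s(0)+o_{t,r}(1)\big)$ and $\sigma_s(t)^{(1+2s)/(2s)}=t$ yields $v=-a_{N,s}H_s(0)+o_t(1)$, i.e.\ the \emph{opposite} sign to the one you claim to prove. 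The source of the discrepancy is the relation between $\partial_{x_N}u$ and the integral in Lemma~\ref{Lemma2}: since $\nabla\K_s(\cdot,t)$ is odd, one has
\begin{equation*}
\frac{\partial u}{\partial x_N}(\theta vte_N,\sigma_s(t)) = -\int_{\R^N}\frac{\partial\K_s}{\partial y_N}\big(y',y_N-vt\theta,\sigma_s(t)\big)\tau_E(y)\,dy,
\end{equation*}
that is, a \emph{minus} sign appears, whereas your ``pinning the sign by positivity'' argument implicitly chooses a plus. If you keep the minus sign together with Lemma~\ref{Lemma2} as stated, you recover exactly the paper's equation $u(0,\sigma_s(t)) = vt\int_0^1\!\big[\text{Lemma~\ref{Lemma2}'s integral}\big]d\theta$ and hence $v=+a_{N,s}H_s(0)$; but then you must reconcile this with Lemma~\ref{Lemma21}, which forces $\partial_{x_N}u>0$ and hence forces Lemma~\ref{Lemma2}'s integral to be negative, not positive as that lemma asserts. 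In short, the approach is the right one, but the ``sign is pinned by Lemma~\ref{Lemma21}'' step, as you have used it, is inconsistent with the algebra and with Lemma~\ref{Lemma2} simultaneously, and the final displayed conclusion does not follow from the displayed identities. You need to write out the odd-symmetry step explicitly and decide which of the two lemma signs you are taking as authoritative; the answer $v=a_{N,s}H_s(0)$ then comes from the FTC identity together with Lemma~\ref{Lemma2}, not from the positivity in Lemma~\ref{Lemma21}.
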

\begin{proof}
We put with $x=vte_N$ and we recall that 	 
	$$
	u(x,\sigma_s(t))= \int_{\R^N} \K_s(y-x, \sigma_s(t)) \tau_E(y) dy=0.
	$$ By  the fundamental theorem of calculus, we have
	$$
	\K_s(y-x,\sigma_s(t))=\K_s(y,\sigma_s(t))-vt\int_0^1 \frac{\partial\K_s}{\partial y_N}(y^\prime, y_N-vt\theta,\sigma_s(t)) d \theta.
	$$
	Then
	\begin{align}\label{Solution1}
	u(x,\sigma_s(t))=\ti J_r(t)+\sigma_s(t) \ti I_r(t)+\sigma_s(t)C_{N,s} \int_{\R^N\setminus Q_r} \frac{\t_E(y)}{|y|^{N+2s}} dy-
	vt\int_{\R^N} \int_0^1 \frac{\partial\K_s}{\partial y_N}(y^\prime, y_N-vt\theta,\sigma_s(t))\tau_E(y)d \theta dy,
	\end{align}
	where $\ti J_r(t)=J_r(\s_s(t))$, $\ti I_r(t)=I_r(\s_s(t))$, while $I_r(t)$ and $J_r(t)$ are given by Lemma \ref{L1-Convergence}. Moreover, by Lemma \ref{Lemma2}, we have
	$$
	\int_{\R^N}\frac{\partial\K_s}{\partial y_N}(y^\prime, y_N-vt\theta,\sigma_s(t))\tau_E(y)dy=2 (\sigma_s(t))^{-1/2s} \int_{\R^{N-1}} P_s(y^\prime, 0)dy^\prime+O\left((\sigma_s(t))^{\frac{2s-1}{2s}}\right).
	$$
	Therefore
	\begin{align}\label{Lemma2Erro}
	\int_0^1 \int_{\R^N}\frac{\partial\K_s}{\partial y_N}(y^\prime, y_N-vt\theta,\sigma_s(t))\tau_E(y) dy d \theta=2 (\sigma_s(t))^{-1/2s} \int_{\R^{N-1}} P_s(y^\prime, 0)dy^\prime+O\left((\sigma_s(t))^{\frac{2s-1}{2s}}\right)
	\qquad  \textrm{ as } t\to 0.
	\end{align}
	Putting  \eqref{Lemma2Erro}  in  \eqref{Solution1}, we obtain  that
	\begin{align*}
&	u(x,\sigma_s(t))=\ti J_r(t)+\sigma_s(t)\ti I_r(t)+\sigma_s(t)C_{N,s} \int_{\R^N\setminus Q_r} \frac{\t_E(y)}{|y|^{N+2s}} dy\\
	&-
	vt\left(2 (\sigma_s(t))^{-1/2s} \int_{\R^{N-1}} P_s(y^\prime, 0)dy^\prime+O\left((\sigma_s(t))^{\frac{2s-1}{2s}}\right)\right)\\
	&=\sigma_s(t)\bigg[(\sigma_s(t))^{-1}\ti J_r(t)+\ti I_r(t)+C_{N,s} \int_{\R^N\setminus Q_r} \frac{\t_E(y)}{|y|^{N+2s}} dy\ 
	-2vt (\sigma_s(t))^{\frac{-2s-1}{2s}} \int_{\R^{N-1}} P_s(y^\prime, 0)dy^\prime+O\left(\sigma_s(t)\right)\bigg].
	\end{align*}
Recalling that  $\sigma_s(t)=t^{\frac{2s}{1+2s}}$ and using the fact that $ u(x,\sigma_s(t))=0$, we have
	\begin{align*}\label{Solution2}
	0=t^{\frac{-2s}{1+2s}} \ti J_r(t)+ \ti I_r(t)+C_{N,s} \int_{\R^N\setminus Q_r} \frac{\t_E(y)}{|y|^{N+2s}} dy-2v \int_{\R^{N-1}} P_s(y^\prime, 0)dy^\prime+O(t^{\frac{2s}{1+2s}}) \qquad\textrm{ as $t \to 0$}.
	\end{align*}
As a consequence,
	\begin{equation*}
	\left|C_{N,s} H_s(0)-2v \int_{\R^{N-1}} P_s(y^\prime, 0)dy^\prime\right|\leq \left| C_{N,s} H_s(0)-C_{N,s} \int_{\R^N\setminus Q_r} \frac{\t_E(y)}{|y|^{N+2s}} dy\right|+|t^{\frac{-2s}{1+2s}}\ti  J_r(t)|+ \ti I_r(t)+O(t^{\frac{2s}{1+2s}}).
	\end{equation*}
	Therefore by Lemma \ref{L1-Convergence},  taking the limsups  as $ t\to 0$ and as $r \to 0$ respectively, we obtain
	$$
	\limsup_{t\to 0}\left|C_{N,s}  H_s(0)-2v \int_{\R^{N-1}} P_s(y^\prime, 0)dy^\prime\right|=0.
	$$
	Hence
	$$
	v=\frac{C_{N,s}  H_s(0)}{\displaystyle  2\int_{\R^{N-1}} P_s(y^\prime, 0)dy^\prime}+o_t(1) \qquad \textrm{ as $t \to 0.$}
	$$
	This then ends the proof.
\end{proof}

\section{Proof of Theorem \ref{MainResult} in the case $s\in(\frac{1}{2}, 1)$}\label{section2}
We have the following result.
\begin{proposition}\label{s-in-1/2-1}
We consider $E$ a hypersurface of class $C^3$ satisfying the condition in Section \ref{section4}.
For $s\in(1/2, 1),$ we have 
\begin{equation}\label{NormalvelocityClassic1/2-1}
v=c_{N,s} H(0)+O\left(t^{\frac{2s-1}{2}}\right),\hspace{1cm}\mbox{ as $t\rightarrow 0.$}
\end{equation}
where $H(0)$ is the normalized  mean curvature of $\de E$ at $0$ and the positive constant $c_{N,s}$ is given by
$$
c_{N, s}=\frac{\displaystyle\int_{\R^{N-1}}|y^\prime|^2  P_s(y^\prime,0) dy^\prime}{\displaystyle 2\int_{\R^{N-1}}  P_s(y^\prime,0) dy^\prime}.
$$
\end{proposition}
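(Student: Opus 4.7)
The plan is to follow the same template as Proposition~\ref{Prop01/2}: from the identity $u(vt e_N,\sigma_s(t))=0$, I apply the fundamental theorem of calculus in the $x_N$-variable to obtain
\begin{equation*}
vt \int_0^1 \!\!\int_{\R^N} \frac{\partial \K_s}{\partial y_N}(y', y_N-vt\theta, \sigma_s(t))\, \tau_E(y) \, dy\, d\theta \;=\; u(0, \sigma_s(t)).
\end{equation*}
The proof of Lemma~\ref{Lemma2} uses only the bound $\gamma(y')=O(|y'|^{1+\beta})$ with $\beta>2s$, which in our $\mathcal{C}^3$ setting holds with $\beta=1$ (since $2s<2$). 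Its conclusion therefore transfers verbatim and yields, for the left-hand side integrand,
\begin{equation*}
\int_{\R^N} \tfrac{\partial \K_s}{\partial y_N}(y', y_N-vt\theta, \sigma_s(t))\, \tau_E(y)\, dy \;=\; 2\,\sigma_s(t)^{-1/2s}\!\!\int_{\R^{N-1}}\! P_s(y',0)\, dy' + O\bigl(\sigma_s(t)^{(2s-1)/2s}\bigr).
\end{equation*}

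The essentially new ingredient is the expansion of $u(0,\sigma_s(t))$, in which the classical mean curvature $H(0)$ must now emerge, since the principal value defining $H_s(0)$ is no longer available for $s>1/2$. Splitting the integral over $Q_r$ and $Q_r^c$, the far-field contribution is $O(\sigma_s(t))=O(t^s)$ by \eqref{KernelFormula-1}. For the near-field, the symmetry computation used in the proof of Lemma~\ref{L1-Convergence} gives
\begin{equation*}
\int_{Q_r}\! \K_s(y,\sigma_s(t))\, \tau_E(y)\, dy \;=\; 2\int_{B_r^{N-1}}\! \int_0^{\gamma(y')}\! \K_s(y', y_N, \sigma_s(t))\, dy_N\, dy'.
\end{equation*}
Writing $\K_s(y', y_N, \sigma)=\K_s(y',0,\sigma)+y_N\int_0^1\partial_{y_N}\K_s(y',\theta y_N,\sigma)\, d\theta$ and integrating in $y_N$, the remainder produces an integral of exactly the type treated in Lemma~\ref{lemmaComplement22} (with $vt$ set to $0$) and contributes $O(\sigma_s(t))$, whereas the principal term reduces to $2\int_{B_r^{N-1}}\gamma(y')\K_s(y',0,\sigma_s(t))\, dy'$.

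Next I would substitute the $\mathcal{C}^3$ Taylor expansion \eqref{star}, $\gamma(y')=\tfrac{1}{2}D^2\gamma(0)[y',y']+O(|y'|^3)$. The radial symmetry of $P_s(\cdot,0)$ on $\R^{N-1}$ kills the off-diagonal entries of $D^2\gamma(0)$, so the main term becomes $H(0)\int |y'|^2 \K_s(y',0,\sigma_s(t))\, dy'$. A change of variable $y'=\sigma_s(t)^{1/(2s)} z'$, together with the key observation that $\int_{\R^{N-1}}|z'|^2 P_s(z',0)\, dz'<\infty$ precisely because $s>1/2$, produces the finite limit $H(0)\,\sigma_s(t)^{1/(2s)}\!\int_{\R^{N-1}}|y'|^2 P_s(y',0)\, dy' + O(\sigma_s(t))$. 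The cubic Taylor remainder is likewise $O(\sigma_s(t))$: after the same rescaling, $\int_{B_R^{N-1}}|z'|^3 P_s(z',0)\, dz'$ grows like $R^{2-2s}$ with $R=r\sigma_s(t)^{-1/(2s)}$, and this divergence is exactly canceled by the factor $\sigma_s(t)^{1/s}$ picked up from the change of variables. With $\sigma_s(t)=t^s$, this gives
\begin{equation*}
u(0,\sigma_s(t))\;=\;H(0) \, t^{1/2}\!\int_{\R^{N-1}}|y'|^2 P_s(y',0)\, dy' + O(t^s).
\end{equation*}

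Finally, inserting these two expansions into the main identity, dividing by $2\, t^{1/2}\!\int_{\R^{N-1}}P_s(y',0)\, dy'$, and invoking Lemma~\ref{DistanceEstimate} (which gives $vt=O(\sigma_s(t)^{(1+2s)/(2s)})=O(t^{(1+2s)/2})$) to control the contribution of the $O(\sigma_s(t)^{(2s-1)/2s})$ error produced by Lemma~\ref{Lemma2}, one arrives at $v=c_{N,s} H(0)+O(t^{(2s-1)/2})$ as claimed. The step I expect to be most delicate is the handling of the cubic Taylor remainder: since $|y'|^3 P_s(y',0)$ fails to be integrable on $\R^{N-1}$ when $s\in(1/2,1)$, the estimate has to be done on the truncated ball $B_{r\sigma_s(t)^{-1/(2s)}}^{N-1}$, and the $R^{2-2s}$ divergence at infinity must be balanced precisely against the $\sigma_s(t)$-weights produced by the scaling in order for the error to close at the advertised order $O(t^{(2s-1)/2})$.
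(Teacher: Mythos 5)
Your route differs from the paper's: you apply the FTC-based decomposition of Proposition~\ref{Prop01/2} to the case $s>1/2$, splitting $u(vte_N,\sigma_s(t))$ into $u(0,\sigma_s(t))$ plus $vt$ times the gradient integral, whereas the paper keeps $vte_N$ inside the kernel, shifts it into the $y_N$-limits by a change of variable, and Taylor-expands against the combined upper limit $\gamma(y')-vt$, handling the remainder directly via Lemma~\ref{lemmaComplement22}. Both decompositions extract the same leading terms; your version is conceptually appealing because it treats the two halves of the theorem by the same mechanism, but it contains one concrete flaw.

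You assert that the conclusion of Lemma~\ref{Lemma2} transfers verbatim because $\gamma(y')=O(|y'|^{1+\beta})$ holds with $\beta=1$. But Lemma~\ref{Lemma2} requires $\beta>2s$, and for $s\in(1/2,1)$ we have $2s>1$, hence $\beta=1<2s$ and the hypothesis fails (and $\gamma(y')=O(|y'|^2)$ is the best a $C^3$ surface with $H(0)\neq 0$ can give). Tracing through the proof of Lemma~\ref{Lemma2}: after rescaling, $\sigma_s(t)^{-1/s}\gamma(\sigma_s(t)^{1/2s}y')=O(|y'|^2)$ carries no positive power of $\sigma_s(t)$, so the gradient-integral error is only $O(1)$, not $O\bigl(\sigma_s(t)^{(2s-1)/2s}\bigr)$ as you claim. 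This is ultimately harmless for the final expansion: the $O(1)$ error is multiplied by $vt=O\bigl(t^{(1+2s)/2}\bigr)$, and after dividing by $2t^{1/2}\int_{\R^{N-1}}P_s(y',0)\,dy'$ it contributes $O(t^{1/2})=o\bigl(t^{(2s-1)/2}\bigr)$, so \eqref{NormalvelocityClassic1/2-1} survives. But you cannot obtain it by citing Lemma~\ref{Lemma2} as stated; you must re-derive the gradient-integral estimate for $s>1/2$, using that $\int_{\R^{N-1}}|y'|^2\bigl(1+|y'|^{N+2s+1}\bigr)^{-1}dy'$ is finite. Incidentally, your self-assessment of where the difficulty lies is misplaced: the cubic Taylor remainder in $u(0,\sigma_s(t))$ closes at $O(\sigma_s(t))$ exactly as you compute, and it is the gradient integral that actually needs repair.
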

\begin{proof}
	We let  $x=vt e_N\in \de E_t$ and we expand
	\begin{equation}\label{P1}
	u(x,\sigma_s(t))=\int_{\R^N} \K_s(y-x,\sigma_s(t)) \t_E(y) dy= \int_{Q_r} \K_s(y-x,\sigma_s(t)) \t_E(y) dy+\int_{Q_r^c} \K_s(y-x,\sigma_s(t)) \t_E(y) dy,
	\end{equation}
	By  \eqref{KernelFormula-1} and Lemma \ref{DistanceEstimate}, we have
	$$
	\int_{Q_r^c} \K_s(y-x,\sigma_s(t)) \t_E(y) dy= O(\sigma_s(t)) \qquad \textrm{ as $t \to 0$}.
	$$
	Therefore
	\begin{align}
	u(x,\sigma_s(t))= \int_{Q_r} \K_s(y-x,\sigma_s(t)) \t_E(y) dy+O(\sigma_s(t)).\label{Lundi111}
	\end{align}
By a change of variable, the fact that $\t_E=\1_E(x)-\1_{\R^N\setminus \ov E}(x)$ and $x=vt e_N$, we have
	\begin{align*}
	\int_{Q_r} &\K_s(y-x,\sigma_s(t)) \t_E(y) dy=\int_{E\cap \B_r} \K_s(y-x,\sigma_s(t)) dy-\int_{E^c\cap Q_r} \K_s(y-x,\sigma_s(t))dy\\\\
	&=\int_{B_r^{N-1}} \int_{-r}^{\gamma(y^\prime)} \K_s(y-x,\sigma_s(t)) dy-\int_{B_r^{N-1}} \int^{r}_{\gamma(y^\prime)} \K_s(y-x,\sigma_s(t)) dy\\\\
	&=\int_{B_r^{N-1}} \int_{-r-vt}^{\gamma(y^\prime)-vt} \K_s(y,\sigma_s(t)) dy-\int_{B_r^{N-1}} \int^{r-vt}_{\gamma(y^\prime)-vt} \K_s(y,\sigma_s(t)) dy\\\\
	&=2\int_{B_r^{N-1}} \int_{0}^{\gamma(y^\prime)-vt} \K_s(y,\sigma_s(t)) dy+\int_{B_r^{N-1}} \int_{-r-vt}^0 \K_s(y,\sigma_s(t)) dy-\int_{B_r^{N-1}} \int^{r-vt}_0 \K_s(y,\sigma_s(t)) dy\\\\
	&=2\int_{B_r^{N-1}} \int_{0}^{\gamma(y^\prime)-vt} \K_s(y,\sigma_s(t)) dy+\int_{B_r^{N-1}} \int_{-r-vt}^{-r+vt} \K_s(y,\sigma_s(t)) dy.
	\end{align*}
The last line is due to the fact that the map $y_N \to \K_s(y, \sigma_s(t))$ is even  so that
$$
\int_{0}^{r-vt} \K_s(y,\sigma_s(t)) dy_N= -\int_{0}^{-r+vt} \K_s(y,\sigma_s(t)) dy_N.
$$
	Therefore we have
	\begin{equation}\label{ll1}
	\int_{Q_r} \K_s(y-x,\sigma_s(t)) \t_E(y) dy=2\int_{B_r^{N-1}} \int_{0}^{\gamma(y^\prime)-vt} \K_s(y,\sigma_s(t)) dy+\int_{B_r^{N-1}} \int_{-r-vt}^{-r+vt} \K_s(y,\sigma_s(t)) dy.
	\end{equation}
	By  \eqref{KernelFormula-1} and the fact that $vt=o_t(1)$, we have
	\begin{equation}
	\int_{B_r^{N-1}}\int_{-r-vt}^{-r+vt} \K_s(y,\sigma_s(t)) dy=O(\sigma_s(t)).
	\end{equation}
	By a change of variable, the Fundamental Theorem of Calculus, \eqref{KernelFormula} and \eqref{star}, we have
	\begin{align*}
	&\int_{B_r^{N-1}} \int_{0}^{\gamma(y^\prime)-vt} \K_s(y,\sigma_s(t)) dy\\
	&= \int_{B_r^{N-1}} \int_{0}^{\gamma(y^\prime)-vt}\K_s(y^\prime, 0,\sigma_s(t))dy+\int_{B_r^{N-1}} \int_{0}^{\gamma(y^\prime)-vt} \int_0^1 y_N  \frac{\de \K_s}{\de y_N}(y^\prime, \theta y_N, \sigma_s(t)) dyd\theta\\
	&= \int_{B_r^{N-1}}\K_s(y^\prime, 0,\sigma_s(t))\left(\frac{1}{2}\gamma_{y_i y_j}(0) y_i y_j+O(|y^\prime|^3)-vt\right)dy' \\
	&+\int_{B_r^{N-1}} \int_{0}^{\gamma(y^\prime)-vt} \int_0^1 y_N \frac{\de \K_s}{\de y_N}(y^\prime, \theta y_N, \sigma_s(t)) dyd\theta\\
	&= \frac{\D \gamma(0)}{2(N-1)}\int_{B_r^{N-1}}|y^\prime|^2 \K_s(y^\prime, 0,\sigma_s(t)) dy^\prime-vt\int_{B_r^{N-1}}\K_s(y^\prime, 0,\sigma_s(t)) dy^\prime\\\\
	&+\int_{B_r^{N-1}} \int_{0}^{\gamma(y^\prime)-vt} \int_0^1 y_N \frac{\de \K_s}{\de y_N}(y^\prime, \theta y_N, \sigma_s(t)) dyd\theta+O\left( \int_{B_r^{N-1}} |y^\prime|^3\K_s(y^\prime, 0,\sigma_s(t))dy'\right).
	\end{align*}
	Therefore, recalling \eqref{eq:def-mc},
	\begin{align}\label{k11}
	\int_{B_r^{N-1}} \int_{0}^{\gamma(y^\prime)-vt} \K_s(y,\sigma_s(t)) dy= &\frac{H(0)}{2}\int_{B_r^{N-1}}|y^\prime|^2 \K_s(y^\prime, 0,\sigma_s(t)) dy^\prime-vt\int_{B_r^{N-1}}\K_s(y^\prime, 0,\sigma_s(t)) dy^\prime \nonumber \\
	&+\int_{B_r^{N-1}} \int_{0}^{\gamma(y^\prime)-vt} \int_0^1 y_N \frac{\de \K_s}{\de y_N}(y^\prime, \theta y_N, \sigma_s(t)) dyd\theta \nonumber\\
	&+O\left( \int_{B_r^{N-1}} |y^\prime|^3\K_s(y^\prime, 0,\sigma_s(t))dy'\right).
	\end{align}
	By a change of variable  and \eqref{KernelFormula}, we have
	\begin{equation}\label{k12}
	\int_{B_r^{N-1}}|y^\prime|^2 \K_s(y^\prime, 0,\sigma_s(t)) dy^\prime= (\sigma_s(t))^{\frac{1}{2s}} \int_{B_{r (\sigma_s(t))^{-\frac{1}{2s}}}^{N-1}}|y^\prime|^2 P_s(y^\prime, 0) dy^\prime
	\end{equation}
	and
	\begin{align}\label{1212}
	\int_{B_r^{N-1}}\K_s(y^\prime, 0,\sigma_s(t)) dy^\prime =(\sigma_s(t))^{-\frac{1}{2s}} \int_{B_{r (\sigma_s(t))^{-\frac{1}{2s}}}^{N-1}} P_s(y^\prime, 0) dy^\prime.
	\end{align}
	Moreover by \eqref{KernelEstimate}, we get
	\begin{equation}\label{k13}
	(\sigma_s(t))^{\frac{1}{2s}} \int_{\R^{N-1}\setminus B_{r (\sigma_s(t))^{-\frac{1}{2s}}}^{N-1}}|y^\prime|^2 P_s(y^\prime, 0) dy^\prime+(\sigma_s(t))^{\frac{-1}{2s}} \int_{\R^{N-1}\setminus B_{r (\sigma_s(t))^{-\frac{1}{2s}}}^{N-1}} P_s(y^\prime, 0) dy^\prime=O(\sigma_s(t)) \qquad \textrm{as  $t \to 0$}
	\end{equation}
	and 
	\begin{equation}\label{k1313}
	\int_{B_r^{N-1}} |y^\prime|^3\K_s(y^\prime, 0,\sigma_s(t)) dy^\prime=O(\sigma_s(t)).
	\end{equation}
	By Lemma \ref{lemmaComplement22}, we get
	\begin{equation}\label{k14}
	\int_{B^{N-1}_r}\int_0^{\gamma(y')-vt}\int_0^1 y_N\frac{\partial \K_s}{\partial y_N}(y',\theta y_N,\sigma_s(t))dyd\theta=O\left(\sigma_s(t)\right).
	\end{equation}
	Combining \eqref{ll1}, \eqref{k11}, \eqref{k12}, \eqref{1212},  \eqref{k13} and \eqref{k14}, we obtain
	\begin{align}\label{P3}
	\int_{Q_r} \K_s(y-x,\sigma_s(t)) \t_E(y) dy=&(\sigma_s(t))^{\frac{1}{2s}}\frac{H(0)}{2}\int_{\R^{N-1}}|y^\prime|^2 P_s(y^\prime, 0) dy^\prime \nonumber\\
&\quad-vt(\sigma_s(t))^{\frac{-1}{2s}}\int_{\R^{N-1}}P_s(y^\prime, 0) dy^\prime +O\left(\sigma_s(t)\right).
	\end{align}
	By \eqref{Lundi111} and \eqref{P3}, we obtain
	\begin{align*}
	u(x,\sigma_s(t))&=(\sigma_s(t))^{1/2s} H(0)\int_{\R^{N-1}}|y^\prime|^2  P_s(y^\prime,0) dy^\prime-2vt(\sigma_s(t))^{-1/2s} \int_{\R^{N-1}}  P_s(y^\prime,0) dy^\prime+O\left(\sigma_s(t)\right)\\
	&=(\sigma_s(t))^{-1/2s}\bigg[(\sigma_s(t))^{1/s}H(0)\int_{\R^{N-1}}|y^\prime|^2  P_s(y^\prime,0) dy^\prime-2vt \int_{\R^{N-1}}  P_s(y^\prime,0) dy^\prime+O\left((\sigma_s(t))^{\frac{1+2s}{2s}}\right)\bigg].
	\end{align*}
	Since  $x=vt \nu  \in \de E_t$, we have $ u(x,\sigma_s(t))= 0.$
Now, from the definition of  $\sigma_s(t)=t^s$, we deduce that 
	\begin{align*}
	H(0)\int_{\R^{N-1}}|y^\prime|^2  P_s(y^\prime,0) dy^\prime-2v \int_{\R^{N-1}}  P_s(y^\prime,0) dy^\prime+O\left(t^{\frac{2s-1}{2}}\right)=0.
	\end{align*}
	Thus
	$$
	v=c_{N,s} H(0)+O(t^{\frac{2s-1}{2}}),
	$$
	where
	$$
	c_{N, s}=\frac{\displaystyle\int_{\R^{N-1}}|y^\prime|^2  P_s(y^\prime,0) dy^\prime}{\displaystyle 2\int_{\R^{N-1}}  P_s(y^\prime,0) dy^\prime}.
	$$
	This then ends the proof.
\end{proof}

\section{Proof of Theorem \ref{MainResult} in the case $s=\frac{1}{2}$}\label{section3}
%
As usual, we consider   the function
$$
u(x, t)=\K_{1/2} (\cdot, t)\star\tau_E(x)
$$
and recall that
\begin{equation*}
E_t:= \lbrace x \in \R^N \quad :\quad u(x, \s_{1/2}(t))\geq 0 \rbrace.
\end{equation*}
To alleviate the notations, for the following of this section, we write $\sigma_{1/2}(t):=\sigma(t).$  
\begin{proposition}\label{propositionS=1/2}
	For $s=1/2,$ we have
	\begin{equation}\label{NormalvelocityClassic1/2}
	v=b_{N}(t) H(0)+O\left(\frac{1}{\log(\sigma_{1/2}(t))}\right) \qquad \textrm{as } t \to 0,
	\end{equation}
	where $H(0)$ is the mean curvature of $\de E$ at $0$.
\end{proposition}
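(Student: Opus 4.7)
The plan is to mirror the argument of Proposition \ref{s-in-1/2-1}, since $E$ is again of class $\calC^3$ and we may use the quadratic expansion \eqref{star} for $\gamma$. We start from $u(vte_N,\sigma(t))=0$ and split the integral defining $u$ into $Q_r$ and its complement. By Lemma \ref{L1-Convergence} (applied with $t$ replaced by $\sigma(t)$) together with the kernel bound \eqref{KernelFormula-1} and Lemma \ref{DistanceEstimate}, the contribution from $Q_r^c$ is $O(\sigma(t))$. Inside $Q_r$, using the evenness of $\K_{1/2}$ in $y_N$, exactly as in \eqref{ll1} we reduce to
\[
\int_{Q_r}\K_{1/2}(y-x,\sigma(t))\tau_E(y)\,dy
=2\int_{B_r^{N-1}}\!\int_0^{\gamma(y')-vt}\!\K_{1/2}(y,\sigma(t))\,dy+O(\sigma(t)).
\]

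Next I would expand $\K_{1/2}(y',y_N,\sigma(t))=\K_{1/2}(y',0,\sigma(t))+y_N\!\int_0^1\partial_{y_N}\K_{1/2}(y',\theta y_N,\sigma(t))\,d\theta$ under the integral sign. The second piece is controlled by Lemma \ref{lemmaComplement22} and yields $O(\sigma(t))$. Substituting the quadratic expansion \eqref{star} for $\gamma$ and using the rotational symmetry of $P_{1/2}(\cdot,0)$ in $\R^{N-1}$ to replace $D^2\gamma(0)[y',y']$ by $\tfrac{\Delta\gamma(0)}{N-1}|y'|^2=H(0)|y'|^2$, this produces
\[
0=H(0)\!\int_{B_r^{N-1}}\!|y'|^2\K_{1/2}(y',0,\sigma(t))\,dy'-2vt\!\int_{B_r^{N-1}}\!\K_{1/2}(y',0,\sigma(t))\,dy'+O(\sigma(t)),
\]
where the cubic remainder from $O(|y'|^3)$ is absorbed as in the proof of Proposition \ref{s-in-1/2-1} and handled directly by the kernel estimate.

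The decisive step (which is also the main obstacle) is the asymptotic analysis of the two remaining integrals after the scaling $y'=\sigma(t)z'$. Since $P_{1/2}(z',0)\sim |z'|^{-N-1}$, we have the clean bound
\[
\int_{B_r^{N-1}}\!\K_{1/2}(y',0,\sigma(t))\,dy'=\sigma(t)^{-1}\!\int_{\R^{N-1}}\!P_{1/2}(z',0)\,dz'+O(\sigma(t)),
\]
but the second moment is logarithmically divergent since $|z'|^2 P_{1/2}(z',0)\sim |z'|^{1-N}$, giving
\[
\int_{B_r^{N-1}}\!|y'|^2\K_{1/2}(y',0,\sigma(t))\,dy'=\sigma(t)\!\int_{B^{N-1}_{r/\sigma(t)}}\!|z'|^2 P_{1/2}(z',0)\,dz',
\]
and this last integral grows like $|\log\sigma(t)|$. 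Replacing the radius $r/\sigma(t)$ by $1/\sigma(t)$ only costs an $O(1)$ additive term, which is absorbed into $O(1/|\log\sigma(t)|)$ once we divide by $|\log\sigma(t)|$.

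Inserting these expansions, multiplying by $\sigma(t)$, and invoking the defining relation \eqref{scaling1}, namely $\sigma(t)^2|\log\sigma(t)|=t$, the equation becomes
\[
H(0)\!\int_{B^{N-1}_{\sigma(t)^{-1}}}\!|z'|^2 P_{1/2}(z',0)\,dz'-2v|\log\sigma(t)|\!\int_{\R^{N-1}}\!P_{1/2}(z',0)\,dz'+O(1)=0,
\]
and dividing by $2|\log\sigma(t)|\int_{\R^{N-1}}P_{1/2}(z',0)\,dz'$ yields the claim. The central difficulty is precisely the borderline logarithmic divergence of the second moment of $P_{1/2}(\cdot,0)$; the implicit choice of $\sigma(t)$ in \eqref{scaling1} is the unique balance that makes the curvature term and the velocity term of the same order and produces the explicit $t$-dependent constant $b_N(t)$.
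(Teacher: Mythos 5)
Your proposal is correct and follows essentially the same route as the paper's proof: split into $Q_r$ and $Q_r^c$, use the evenness of $\K_{1/2}$ in $y_N$ to reduce to the one-sided integral over $\{0<y_N<\gamma(y')-vt\}$, Taylor-expand the kernel in $y_N$, substitute \eqref{star}, rescale, and balance the logarithmically divergent second moment against $vt\sigma(t)^{-1}$ via \eqref{scaling1}. The only differences are cosmetic — you invoke Lemma \ref{lemmaComplement22} directly (which is valid for $s=1/2$ since Lemma \ref{DistanceEstimate} gives $vt=O(\sigma(t)^2)$) where the paper re-derives the same bound inline, and the word ``multiplying by $\sigma(t)$'' in your penultimate step should read ``dividing by $\sigma(t)$'' (the displayed equation you write is nonetheless the correct one).
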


\begin{proof}
Recall that $x=vt\nu\to 0$ as $t\to0$, thanks to Lemma \ref{DistanceEstimate}.
We write
	\begin{equation}\label{P111}
	u(x,\s(t))=\int_{\R^N} \K_{1/2}(y-x,\s(t)) \t_E(y) dy= \int_{Q_r} \K_{1/2}(y-x, \s(t)) \t_E(y) dy+\int_{Q_r^c} \K_{1/2}(y-x,\s(t)) \t_E(y) dy,
	\end{equation}
	where $Q_r=B^{N-1}_r \times (-r, r)$. By  \eqref{KernelFormula-1}, we have
	$$
	\int_{Q_r^c} \K_{1/2}(y-x, \s(t)) \t_E(y) dy=O(\s(t)) \qquad \textrm{ as  $t\to 0$}.
	$$ 
	Then, we have
	\begin{align}
	u(x,\s(t))=\int_{Q_r} \K_{1/2}(y-x,\s(t)) \t_E(y) dy+O(\s(t)). \label{Lundi211}
	\end{align}
	By a change of variable   and \eqref{star}, we have
	\begin{align*}
&	\int_{Q_r} \K_{1/2}(y-x,\s(t)) \t_E(y) dy=\int_{E\cap \B_r} \K_{1/2}(y-x,\s(t)) dy-\int_{E^c\cap Q_r} \K_{1/2}(y-x,\s(t))dy\\
	&=\int_{B_r^{N-1}} \int_{-r}^{\gamma(y^\prime)} \K_{1/2}(y-x,\s(t)) dy-\int_{B_r^{N-1}} \int^{r}_{\gamma(y^\prime)} \K_{1/2}(y-x,\s(t)) dy\\
	&=\int_{B_r^{N-1}} \int_{-r-vt}^{\gamma(y^\prime)-vt} \K_{1/2}(y,\s(t)) dy-\int_{B_r^{N-1}} \int^{r-vt}_{\gamma(y^\prime)-vt} \K_{1/2}(y,\s(t)) dy\\
	&=2\int_{B_r^{N-1}} \int_{0}^{\gamma(y^\prime)-vt} \K_{1/2}(y,\s(t)) dy+\int_{B_r^{N-1}} \int_{-r-vt}^0 \K_{1/2}(y,\s(t)) dy -\int_{B_r^{N-1}} \int^{r-vt}_0 \K_{1/2}(y,\s(t)) dy \\
	&=2\int_{B_r^{N-1}} \int_{0}^{\gamma(y^\prime)-vt} \K_{1/2}(y,\s(t)) dy+\int_{B_r^{N-1}} \int_{-r-vt}^{-r+vt} \K_{1/2}(y,\s(t)) dy.
	\end{align*}
	The last line is due to the fact that the map $y_N \to \K_{1/2}(y, \sigma(t))$ is even  so that
	$$
	\int_{0}^{r-vt} \K_{1/2}(y,\s(t)) dy_N= -\int_{0}^{-r+vt} \K_{1/2}(y,\s(t)) dy_N.
	$$
	Therefore we have
	\begin{align}\label{ll11}
	\int_{Q_r} \K_{1/2}(y-x,\s(t)) \t_E(y) dy&=2\int_{B_r^{N-1}} \int_{0}^{\gamma(y^\prime)-vt} \K_{1/2}(y,\s(t)) dy +\int_{B_r^{N-1}} \int_{-r-vt}^{-r+vt} \K_{1/2}(y,\s(t)) dy.
	\end{align}
Using \eqref{KernelFormula-1},  we find that 
%
	\begin{equation}\label{Errror11}
	\int_{B_r^{N-1}} \int_{-r-vt}^{-r+vt} \K_{1/2}(y,\sigma(t)) dy=O(\s(t)).
	\end{equation}
	By a change of variable, the fundamental theorem of calculus, \eqref{KernelFormula} and \eqref{star}, we have
	\begin{align*}
	&\int_{B_r^{N-1}} \int_{0}^{\gamma(y^\prime)-vt} \K_{1/2}(y,\s(t)) dy\\\\
	&= \int_{B_r^{N-1}} \int_{0}^{\gamma(y^\prime)-vt}\K_{1/2}(y^\prime, 0,\s(t))dy+\int_{B_r^{N-1}} \int_{0}^{\gamma(y^\prime)-vt} \int_0^1 y_N  \frac{\de \K_{1/2}}{\de y_N}(y^\prime, \theta y_N, \s(t)) dyd\theta\\\\
	&= \int_{B_r^{N-1}}\K_{1/2}(y^\prime, 0,\s(t))\left(\frac{1}{2}\gamma_{y_i y_j}(0) y_i y_j+O(|y^\prime|^3)-vt\right)dy'\\\\
	&+\int_{B_r^{N-1}} \int_{0}^{\gamma(y^\prime)-vt} \int_0^1 y_N \frac{\de \K_{1/2}}{\de y_N}(y^\prime, \theta y_N, \s(t)) dyd\theta\\\\
	&= \frac{\D \gamma(0)}{2(N-1)}\int_{B_r^{N-1}}|y^\prime|^2 \K_{1/2}(y^\prime, 0,\s(t)) dy^\prime-vt\int_{B_r^{N-1}}\K_{1/2}(y^\prime, 0,\s(t)) dy^\prime\\\\
	&+\int_{B_r^{N-1}} \int_{0}^{\gamma(y^\prime)-vt} \int_0^1 y_N \frac{\de \K_{1/2}}{\de y_N}(y^\prime, \theta y_N, \s(t)) dyd\theta+O\left( \int_{B_r^{N-1}} |y^\prime|^3\K_{1/2}(y^\prime, 0, \s(t))dy'\right).
	\end{align*}
	Therefore
	\begin{align}\label{k111}
	&\int_{B_r^{N-1}} \int_{0}^{\gamma(y^\prime)-vt} \K_{1/2}(y, \s(t)) dy=\frac{H(0)}{2}\int_{B_r^{N-1}}|y^\prime|^2 \K_{1/2}(y^\prime, 0, \s(t)) dy^\prime \nonumber \\\
	&-vt\int_{B_r^{N-1}}\K_{1/2}(y^\prime, 0, \s(t)) dy^\prime+\int_{B_r^{N-1}} \int_{0}^{\gamma(y^\prime)-vt} \int_0^1 y_N \frac{\de \K_{1/2}}{\de y_N}(y^\prime, \theta y_N, \s(t)) dyd\theta+O(\s(t)).
	\end{align}

By \eqref{KernelFormula}, \eqref{KernelEstimate} and a change of variable, we have
	\begin{align*}
	&\int_{B^{N-1}_r}\int_0^{\gamma(y')-vt} y_N\frac{\partial \K_{1/2}}{\partial y_N}(y',\theta y_N, \s(t))dy\\
	&=O\left(
	\int_{B^{N-1}_{r\s(t)^{-1}}}\int_0^{\s(t)^{-1}\left(\gamma(y'\s(t))-vt\right)} \frac{y_N}{(1+|(y^\prime, \theta y_N)|^2)^{\frac{N+2}{2}}} dy
	\right) = O\left(
	\int_{B^{N-1}_{r\s(t)^{-1}}} \frac{\s(t)^{-2}\left(\gamma(y'\s(t))-vt\right)^2}{(1+|y^\prime|^2)^{\frac{N+2}{2}}} dy'
	\right)\\
	&= O\left( \s^{-2}(t)
	\int_{B^{N-1}_{r\s(t)^{-1}}} \frac{\left(\s(t)^2|y^\prime|^2-vt\right)^2}{(1+|y^\prime|^2)^{\frac{N+2}{2}}} dy'
	\right)=O\left(\s(t)\right)+O(vt)+O\left(v^2t^2(\sigma(t))^{-2}\right).
	\end{align*}
Now  Lemma \ref{DistanceEstimate} yields $vt=O(\s(t)^2)$ and  thus   
	\begin{align}\label{k141}
	\int_{B^{N-1}_r}\int_0^{\gamma(y')-vt}\int_0^1 y_N\frac{\partial \K_{1/2}}{\partial y_N}(y',\theta y_N,\sigma(t))dyd\theta&=O\left(\s(t)\right).
	\end{align}
	We get from \eqref{Lundi211},  \eqref{ll11}, \eqref{Errror11}, \eqref{k111} and \eqref{k141} that
	\begin{align*}
	u(x,\s(t))&=\s(t) H(0) \int_{B^{N-1}_{r \s(t)^{-1}}}|y^\prime|^2  P_{1/2}(y^\prime,0) dy^\prime-2vt \int_{B^{N-1}_{r \s(t)^{-1}}}  P_{1/2}(y^\prime,0) dy^\prime+O(\s(t))\hspace{.2cm} \textrm{ as $t \to 0$}.
	\end{align*}
	Thanks to \eqref{KernelEstimate}, we have
	$$
	\int_{\R^{N-1} \setminus B^{N-1}_{r \s(t)^{-1}}}  P_{1/2}(y^\prime,0) dy^\prime=O(\s(t)^2) \quad \textrm{and} \quad  \int_{B^{N-1}_{\s(t)^{-1}} \setminus B^{N-1}_{r \s(t)^{-1}}} |y'|^2 P_{1/2}(y^\prime,0) dy^\prime=O(1) \qquad \textrm{as } t \to 0.
	$$
	This implies that
	\begin{align}\label{ko1}
	u(x,\s(t))&= \s(t)H(0)\int_{B^{N-1}_{ \s(t)^{-1}}}|y^\prime|^2  P_{1/2}(y^\prime,0) dy^\prime-2vt (\s(t))^{-1} \int_{\R^{N-1}}  P_{1/2}(y^\prime,0) dy^\prime+O(\s(t)).
	\end{align}
	Using polar coordinates and  \eqref{KernelEstimate},  we then have
	\begin{equation}\label{ko2}
	\int_{B^{N-1}_{\s(t)^{-1}}}|y^\prime|^2  P_{1/2}(y^\prime,0) dy^\prime \asymp \calC_{N, 1/2} \o_{N-2} \int_{0}^{1/\s(t)} \frac{m^N}{(1+m^2)^{\frac{N+1}{2}}} dm,
	\end{equation}
	where $\o_{N-2}:=|S^{N-2}|$.
	By the  change of variable $\rho=\frac{1}{m}$, we have
	\begin{align*}
	\int_{0}^{1/\s(t)} \frac{m^N}{(1+m^2)^{\frac{N+1}{2}}} dm&=\int_{\s(t)}^{+\infty}\frac{1}{\rho\left(1+\rho^2\right)^{\frac{N+1}{2}}}d\rho\nonumber\\\
	&=\int_{\s(t)}^1\frac{1}{\rho\left(1+\rho^2\right)^{\frac{N+1}{2}}}d\rho+\int_1^{+\infty}\frac{1}{\rho\left(1+\rho^2\right)^{\frac{N+1}{2}}}d\rho \nonumber\\\
	&=\int_{\s(t)}^1\frac{1}{\rho\left(1+\rho^2\right)^{\frac{N+1}{2}}}d\rho+O(1)= -\log(\s(t))+O(1).
	\end{align*}
 Letting $b_{N}(t):=\frac{\displaystyle\int_{B^{N-1}_{\s(t)^{-1}}}|y^\prime|^2  P_{1/2}(y^\prime,0) dy^\prime }{-2\log(\s(t))\displaystyle\int_{\R^{N-1}}  P_{1/2}(y^\prime,0) dy^\prime}$,  by \eqref{ko1}, \eqref{ko2} and the above estimate, we obtain, as $t\to0$, 
\begin{align*}
u(x,\s(t))&=\s(t)H(0) \int_{B^{N-1}_{\s(t)^{-1}}}|y^\prime|^2  P_{1/2}(y^\prime,0) dy^\prime -2vt(\s(t))^{-1} \int_{\R^{N-1}}  P_{1/2}(y^\prime,0) dy^\prime+O(\s(t)) \\
&=(\sigma(t))^{-1}\bigg[\s^2(t)H(0) \int_{B^{N-1}_{\s(t)^{-1}}}|y^\prime|^2  P_{1/2}(y^\prime,0) dy^\prime-2vt\int_{\R^{N-1}}  P_{1/2}(y^\prime,0) dy^\prime+O(\s^2(t)) \bigg].
\end{align*}
	
	%
	Since  $x=vt\nu \in \de E_t$, we have
	$
	u(x, \s(t))= 0
	$. 
	Recalling that  $t=\s^2(t)|\log(\s(t))|$, we finally get 
	$$
	0=\s(t)\log(\s(t))\left[H(0)\frac{\displaystyle\int_{B^{N-1}_{\s(t)^{-1}}}|y^\prime|^2  P_{1/2}(y^\prime,0) dy^\prime}{|\log(\s(t))|}-2v\int_{\R^{N-1}}  P_{1/2}(y^\prime,0) dy^\prime+O\left(\frac{1}{\log(\s(t))} \right)\right].
	$$
	Hence 
	$$
	v=b_{N}(t) H(0)+O\left(\frac{1}{\log(\s(t))} \right)\hspace{1cm}\mbox{ as $t\rightarrow 0.$}
	$$
	The proof is then ended.
\end{proof}

\end{document}